\documentclass[mn,a4paper,fleqn]{w-art}
\newcommand{\Ueberschrift}{Lifting Galois sections along torsors}
\newcommand{\Kurztitel}{Lifting Galois sections}
\usepackage{times,cite,w-thm}
\usepackage{amssymb} 
\usepackage{mathrsfs}
\usepackage[all]{xy}
\usepackage[OT2, T1]{fontenc}
\usepackage[pdfpagelabels]{hyperref}
 \hypersetup{
   pdftitle={},
   pdfauthor={},
   pdfsubject={},
  pdfkeywords={},
  colorlinks=true,
  breaklinks=true,
  bookmarksopen=true,
  bookmarksnumbered=true,
  pdfpagemode=UseOutlines,
  plainpages=false}


\DeclareMathOperator{\rH}{H}

\DeclareMathOperator{\rK}{K}

\DeclareMathOperator{\Rm}{m}

\newcommand{\bA}{{\mathbb A}}

\newcommand{\bG}{{\mathbb G}}

\newcommand{\bN}{{\mathbb N}}

\newcommand{\bP}{{\mathbb P}}
\newcommand{\bQ}{{\mathbb Q}}
\newcommand{\bR}{{\mathbb R}}

\newcommand{\bT}{{\mathbb T}}

\newcommand{\bZ}{{\mathbb Z}}


\newcommand{\cF}{{\mathscr F}}


\newcommand{\dL}{{\mathcal L}}
\newcommand{\dM}{{\mathcal M}}

\newcommand{\dO}{{\mathcal O}}


\DeclareSymbolFont{cyrletters}{OT2}{wncyr}{m}{n}
\DeclareMathSymbol{\Sha}{\mathalpha}{cyrletters}{"58}



\newcommand{\surj}{\twoheadrightarrow} 
\newcommand{\inj}{\hookrightarrow}
\DeclareMathOperator{\id}{id}
\DeclareMathOperator{\pr}{pr}

\DeclareMathOperator{\BIsom}{\mathbf{Isom}}

\DeclareMathOperator{\Hom}{Hom}

\DeclareMathOperator{\End}{End}

\DeclareMathOperator{\im}{im}

\DeclareMathOperator{\Et}{Et}
\DeclareMathOperator{\FEt}{FEt}




\DeclareMathOperator{\Char}{char} 

\DeclareMathOperator{\Spec}{Spec}

\DeclareMathOperator{\Pic}{Pic}
\DeclareMathOperator{\CPic}{\mathcal{P}\it{ic}}

\newcommand{\Gm}{\bG_{\rm m}}

\DeclareMathOperator{\B}{B}



\DeclareMathOperator{\Br}{Br}
\newcommand{\inv}{{\rm inv}}

\DeclareMathOperator{\h}{H}
\DeclareMathOperator{\R}{R}

\DeclareMathOperator{\ind}{ind}

\DeclareMathOperator{\Gal}{Gal}


\newcommand{\ph}{\varphi}

\newcommand{\et}{\text{\rm \'et}} 

\newcommand{\tors}{{\rm tors}}

\newcommand{\hZ}{{\hat{\bZ}}}




%
\newtheorem*{thm*}{Theorem}
%
%
%
%
%


\begin{document}

\DOIsuffix{theDOIsuffix}
\Volume{XYZ}
\Month{MM}
\Year{YYYY}
\pagespan{1}{}
\Receiveddate{XXXX}
\Reviseddate{XXXX}
\Accepteddate{XXXX}
\Dateposted{XXXX}
\keywords{anabelian geometry, section conjecture, lifting, torsors, torsion packets, cuspidalization conjecture}
\subjclass[msc2010]{14H30, 14G05}

\title[\Kurztitel]{\Ueberschrift} 

\author[N. Borne]{Niels Borne\inst{1}
  \footnote{Corresponding author\quad E-mail:~\textsf{Niels.Borne@math.univ-lille1.fr}}}
\address[\inst{1}]{U.M.R. CNRS 8524, U.F.R. de Math\'ematiques, Universit\'e Lille 1, 59 655 Villeneuve d'Ascq C\'edex, France}

\author[M. Emsalem]{Michel Emsalem\inst{1}}

\author[J. Stix]{Jakob Stix\inst{2}}
\address[\inst{2}]{Institut f\"ur Mathematik, 
Goethe--Universit\"at Frankfurt, Ro\-bert-Mayer-Stra\ss e~6--8,
60325~Frankfurt am Main, Germany}

%
%

\thanks{Niels Borne and Michel Emsalem were supported in part by the Labex CEMPI (ANR-11-LABX-0007-01) and Anr ARIVAF (ANR-10-JCJC 0107)}

\date{\today} 

\begin{abstract} 
The \emph{cuspidalization conjecture}, which is a consequence of Grothendieck's \emph{section conjecture}, asserts that for any smooth hyperbolic curve $X$ over a finitely generated field $k$ of characteristic $0$ and any non empty Zariski open $U \subset X$, every section of  $\pi _1 (X, \bar x) \to \Gal_k$
lifts to a section of $\pi _1 (U,\bar x) \to  \Gal_k$. We consider in this article the problem of lifting Galois sections to the intermediate quotient $ \pi_1^{cc}(U)$
introduced by Mochizuki \cite{mochizuki:abscusp}. We show that when $k = \bQ$ and $D=X\setminus U$ is an union of torsion sub-packets every Galois section actually lifts to $ \pi_1^{cc}(U)$. One of the main tools in the proof is the construction of torus torsors $F_D$ and $E_D$ over $X$ and the geometric interpretation $ \pi_1^{cc}(U) \simeq \pi _1 (F_D)$.
\end{abstract}

\maketitle
\bibliographystyle{mn} 


\section{Introduction}

Let  $\bar k$ be a fixed separable closure of an arbitrary field $k$, and let $\Gal_k =\Gal(\bar k / k)$ be  the absolute Galois group of $k$.  For a variety $X/k$,   let $X_{\bar k} =X\times_k\bar k$ be the base change of $X$ to $\bar k$.


\subsection{Section conjecture and cuspidalization conjecture} 
\label{sub:section_cuspidalization}

The \'etale fundamental group $\pi_1(X, \bar{x})$ of $X$ with base point 
$\bar x$ fits into an 
exact sequence, denoted by $\pi_1(X/k)$:
\begin{equation} \label{eq:pi1ext}
1 \to \pi_1(X_{\bar k}, \bar{x}) \to \pi_1(X,\bar{x}) \to \Gal_k \to 1.
\end{equation}
By functoriality, any rational point $x\in X(k)$ induces a section $s_X(x):\Gal_k\to \pi_1(X,\bar{x})$, well defined in the set $\mathscr S_{\pi_1(X/k)}$ of sections of \eqref{eq:pi1ext} up to conjugation by an element of $\pi_1(X_{\bar k}, \bar{x})$.

In his $1983$ anabelian letter to Faltings (see \cite{grothendieck:brief}), Grothendieck formulated the 
\emph{section conjecture}, 
namely: if $k$ is a field of finite type over $\mathbb Q$, and $X$ is a smooth proper curve of genus at least $2$ over $k$, then the section map
\[s_X: X(k) \rightarrow \mathscr S_{\pi_1(X/k)}  \]
is a bijective correspondence. He also explained how to deduce the fact that $s_X$ is injective from the Mordell-Weil theorem.

Grothendieck's original vision also contains a version of the section conjecture for a nonempty open subset $U$ of $X$. It is however easy to see that the direct analogue fails: the section map \(s_U: U(k) \rightarrow \mathscr S_{\pi_1(U/k)}  \) is still injective, but not surjective in general. Indeed for each rational point $x \in (X\backslash U)(k)$ at infinity, one can consider the local extension $\pi_1(U_x/k)$ associated to a punctured formal neighbourhood $U_x = \Spec(\hat{\dO}_{X,x}) \setminus \{x\}$ of $x$ in $U$. This gives rise to a non-empty packet  $\mathscr S_{\pi_1(U_x/k)}\subset \mathscr S_{\pi_1(U/k)}$ of \emph{cuspidal sections}. The generalized section conjecture now asserts that if $U$ is a hyperbolic smooth curve over a field $k$ of finite type over $\mathbb Q$, then sections in $\mathscr S_{\pi_1(U/k)}$ either come from rational points of $U$, or are cuspidal.

The direct observation that the local extensions $\pi_1(U_x/k)$ split has the following striking consequence: if the section conjecture holds for $X$, then the map $\mathscr S_{\pi_1(U/k)}\to \mathscr S_{\pi_1(X/k)}$ is surjective. This consequence of the section conjecture we are fond of calling the \emph{cuspidalization conjecture}. 

Besides being a test for the section conjecture, the cuspidalization conjecture is also a significant part of various strategies to prove the section conjecture itself. For example,  by working with a fixed and  very specific open subset $U$, namely the complement 
\[
U=X\backslash X(k)
\]
of the set of rational points of $X$, the cuspidalization conjecture enables to reduce the section conjecture to the statement: if a smooth curve has no rational points, every section is cuspidal. This version of the section conjecture should be more tractable, considering the fact that cuspidal sections have been characterized among all sections by Nakamura (see \cite{nakamura:rigidityb,stix:habil}) as those sections that cyclotomically normalize an inertia subgroup.  We emphasize that $X(k) \subseteq X$ is a union of torsion sub-packets in the sense to be described below.

\subsection{Main result} 
\label{sub:main_result}

Our main result concerns lifting of sections in $\mathscr S_{\pi_1(X/k)}$ to the \emph{cuspidally central fundamental group} $\pi_1^{cc}(U)$, which is an intermediate quotient
\[ \pi_1(U) \twoheadrightarrow \pi_1^{cc}(U) \twoheadrightarrow  \pi_1(X) \]
introduced by Mochizuki (see \cite{mochizuki:abscusp}, we recall the original profinite definition in \S\ref{sec:comparewithpicc}. 
Sa\"idi  considered $\pi_1^{cc}(U)$ in the context of the cuspidalization conjecture, see \cite{saidi:goodsections}), and we will compare below how Sa\"idi's approach relates to ours.

The main novelty of this article is that one can relate the geometry of the complement $D=X\backslash U$ to this lifting problem. Following Baker-Poonen (see \cite{baker-poonen:torsionpackets}), we will say that a reduced divisor $D$ is a \emph{torsion sub-packet} if geometrically the difference of any two points in the support of $D$ is torsion in the Picard group (see 
Definition~\ref{defi:torsionpacket}). We can now state:

 \begin{thm*}[Theorem~\ref{thm:mainresult?}] \label{thmA}
 Let $X/\bQ$ be a smooth projective curve of positive genus, let $D \subset X$ be a union of  torsion sub-packets, and set $U=X\backslash D$. Then  every Galois section $s : \Gal_\bQ \to \pi_1(X)$  lifts to a section $\Gal_\bQ \to \pi_1^{cc}(U)$.
 \end{thm*}

We want to stress that Theorem~\ref{thm:mainresult?} seems to be the first \textbf{unconditional} result of lifting to $\pi_1^{cc}(U)$: we make no assumption on the section, 
in contrast to the setup in \cite{saidi:goodsections}, which defines "good" sections and shows that these are precisely the ones that can be lifted. 
 
An unconditional but weaker result had been obtained by the first two authors in \cite{borne-emsalem-critere}.

Let us now explain the main ideas of the proof of our result.
The first step consists of introducing a torsor $F_D$ over $X$ under the torus $T_D=\R _{D/k}\mathbb G_m$  whose \'etale fundamental group $ \pi_1(F_D)$  identifies with $\pi_1^{cc}(U)$  (see Proposition~\ref{cc}). 

The torsor $F_D$ is itself obtained as a $\mathbb G_m$-torsor over an intermediate torsor $E_D$ over $X$ under the torus $S_D=T_D/\mathbb G_m$. In \S\ref{sec:torsorstodivisor}, we prove that $D$ is a torsion sub-packet if and only if $E_D$ is torsion (see Proposition~\ref{thm:crit-torsion-torsors}), a fact that will be crucial in the last step of the proof.
 
We then study the obstruction of lifting a Galois section along a general torsor $E$ over $X$ under a torus $T$. We show in 
Proposition~\ref{thm:pi1extfortorsor} that, if $X$ has $\pi_2^\et(X,\bar x) = 0$,
the morphism $E\to X$ gives rise to a natural fibration short exact sequence, denoted by $\pi_1(E/X)$. The next step identifies the class of the extension $\pi_1(E/X)$ with the arithmetic first Chern class $c_1(E)$ of $E$ (see Proposition~\ref{prop:inj+coincidence}).

Returning to the specific situation of Theorem~\ref{thm:mainresult?}, and given a section $s$ of $X$, the last and most delicate step consists of killing the obstruction $s^*(c_1(F_D))$. This is the aim of \S\ref{sec:lifting}. Besides the fact that the torsor $E_D$ is torsion, we use crucially that the relative Brauer group $\Br(X/k)=\ker(\Br(k)\to \Br(X))$ vanishes when $k=\mathbb Q$ in presence of a section, a statement proven by the third author in \cite{stix:periodindex}. It is this result that limits the scope of Theorem~\ref{thm:mainresult?} to the base field $k= \bQ$, a limitation that may be seen as an indication that the section conjecture could be more accessible for the base field $\bQ$ (like many other results in arithmetic).

\subsection{Notation and conventions}

The notation $\R_{S'/S}(-)$ denotes Weil restriction of scalars along the finite flat map $S' \to S$ which is implicit in the notation.

By  $X_{S'}$ we will denote the base change $X \times_S S'$ of an $S$ scheme $X$ by $S' \to S$. However, we would like to direct the kind reader's attention to the following exceptions. The notation $S_D, T_D$ (resp.\ $E_D,F_D$) introduced in 
Section~\S\ref{sec:torsorstodivisor} denote a certain torus (resp.\ torus torsor) associated to a divisor $D$.

When talking about torsors, one has to fix a topology in which the torsors trivialize locally. This will be the \'etale topology always without further mention.

For a map $X \to S$, denote by $\Pic_{X/S}$ the relative Picard scheme. 
The isomorphism class of a line bundle $\dL$ will be denoted by $[\dL]$.

\smallskip

\subsection*{Acknowledgments} 
The authors would like to thank Amaury Thuillier and Angelo Vistoli for discussions and the ENS Lyon for hospitality during a visit of the third author. We would also like to thank the referees for their careful reading of our text and their suggestions that helped to improve the exposition. 

\smallskip

\section{Torsors associated to a divisor on a curve}
\label{sec:torsorstodivisor}

Let $X$ be a smooth curve  defined over $S= \Spec (k)$.
\subsection{Definition of torsors \texorpdfstring{$E_D$}{ED} and   \texorpdfstring{$F_D$}{FD}} 
\label{func_def_tors}
Let $\Delta : X\to X\times_S X$ be the diagonal embedding. This is a Cartier divisor, defining an invertible sheaf $\mathcal O_{X\times_S X} (\Delta)$. Let $D$ be an effective, \'etale Cartier divisor on $X$. We will always assume that $D\neq 0$. 

\begin{defn}\label{def_torsors}
We define torsors
\[
F_D=\R_{D\times_S X/X}\left(\BIsom_{D\times_S X}(\mathcal O_{X\times_S X} (\Delta)_{|D\times_S X},\mathcal O_{D\times_S X})  \right)   \to X
\]
under the torus $T_D= \R_{D/S}(\Gm)$,
\[
E_D=F_D/\Gm \to X
\]
under the quotient torus $S_D=T_D/\Gm$,  the cokernel of the adjoint map $\Gm \to \R_{D/S} \Gm$.
\end{defn}

\begin{rem}
The torsor $E_D$ can also be introduced in a natural way via Picard schemes. Namely, let $X_D$ be the curve obtained from $X$ by pinching $D$ into a single rational point. One can check that the torsor $E_D\to X$ is the pullback of $\Pic_{X_D/k} \to \Pic_{X/k}$ via the morphism $X\to\Pic_{X/k}$  given by $\Delta$. The construction of $F_D$ is somewhat more subtle: one remarks that the morphism $X\to\Pic_{X/k}$ (thus resp. $E_D\to \Pic_{X_D/k}$) factors through the Picard \emph{stack} $\CPic_{X/k}$ (resp. $\CPic_{X_D/k}$). Since the canonical rational point of $X_D$ gives rise to a morphism $\CPic_{X_D/k}\to \B\mathbb G_m$, one gets by composition a morphism $E_D\to \B\mathbb G_m$ and one verifies easily that the corresponding $\mathbb G_m$-torsor is $F_D\to E_D$.
\end{rem}


\subsection{Torsion sub-packets and torsion criterion}
\label{subsec:tors-pack}

Let $k$ is a field of characteristic $0$, and $X$ be a smooth, proper, geometrically connected curve over $S=\Spec (k)$.

\begin{defn} \label{defi:torsionpacket}
A (reduced) effective divisor $D$ is a \emph{torsion sub-packet} if any degree $0$ divisor on $X_{\bar k}$ with support in $D_{\bar k}$ is torsion.
This means that in the sense of Baker and Poonen \cite{baker-poonen:torsionpackets} the divisor $D_{\bar k}$ is contained in a single torsion packet of $X_{\bar k}$.
\end{defn}

\begin{rem}
\begin{enumerate}
	\item Any rational point defines a torsion sub-packet.
	\item If $D$ is a torsion sub-packet, then any degree $0$ divisor on $X$ with support in $D$ is torsion, because the map $\Pic X \inj \Pic X_{\bar k}$  is injective.
	\item However, it is not true that if any degree $0$ divisor on $X$ with support in $D$ is torsion, then $D$ is a torsion sub-packet. For instance, if $D$ is irreducible, then any degree $0$ divisor on $X$ with support in $D$ is even trivial. But $D$ does not need to be a torsion sub-packet. Indeed, according to \cite{baker-poonen:torsionpackets}, Corollary 3, if $\Char k=0$ and $X$ is of genus at least $2$, the size of torsion packets is bounded. It is thus enough to choose $D$ of degree strictly larger than this size, which is possible if $k$ has an infinite absolute Galois group, to get a counterexample. 
\end{enumerate}
\end{rem}

\begin{prop}
	\label{thm:crit-torsion-torsors}
Let $X$ be a smooth, proper, geometrically connected curve over $S=\Spec (k)$.
 The torsor $E_D \to X$ is torsion if and only if the \'etale divisor $D \subseteq X$ is a torsion sub-packet.
\end{prop}

\begin{proof}
The Hochschild-Serre spectral sequence gives the following exact sequence:
 \[ 0 \rightarrow \h^1(k,S_D) \rightarrow \h^1(X,S_D) \rightarrow \h^1(X_{\bar k},S_D)^{\Gal_k} \]
 Moreover, the first group is torsion by Lemma~\ref{lem:h^1(S_D)} below, thus $E_D$ is torsion if and only if $\left(E_D\right)_{\bar k}$ is torsion. As the formation of $E_D$ also commutes  with base change, we can assume that $k$ is algebraically closed. The group of characters $\Hom(S_D, \Gm)$ of $S_D$ is the group  \(  \left(\oplus_{D( k)} \bZ\right)^{\sum = 0} \) of divisors  $\delta$ of degree $0$ with support in $D$. Since it is free of finite rank, it is enough to show that for such divisor $\delta$, the $\mathbb G_m$-torsor obtained by reduction of the structure group of $E_D$ is torsion. But this $\mathbb G_m$-torsor is nothing else than $\BIsom(\dO_X,\dO_X(\mathcal \delta))$, and the result follows. 
\end{proof}

\begin{defn}
\label{defi:relBR}
(1) For any $k$-scheme $Y$, we define the relative Brauer group  $\Br(Y/k)$ as the kernel of the natural morphism 
$\Br(k)\to \Br(Y)$.

(2) For any $k$-scheme $Y$ of finite type, we define the index 
\[
\ind(Y) =\gcd\left\{ \deg(y) \ ; \  y \text{ is a closed point of } Y\right\},
\]
where $\deg(y)= [\kappa(y):k]$ is the degree of the residue field extension at $y$.
\end{defn}

\begin{lem}
\label{lem:h^1(S_D)}
We have 
\[ 
\h^1(k,S_D)=\Br(D/k),
\]
and, in particular, $\h^1(k,S_D)$ is torsion and killed by $\ind(D)$. 
\end{lem}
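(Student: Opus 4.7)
The plan is to exploit the defining short exact sequence of tori
\[
1 \to \Gm \to T_D \to S_D \to 1
\]
from Definition~\ref{def_torsors}. Taking the associated long exact sequence of Galois cohomology groups (equivalently, \'etale cohomology of $\Spec(k)$), I obtain
\[
\h^1(k,\Gm) \to \h^1(k,T_D) \to \h^1(k,S_D) \to \h^2(k,\Gm) \to \h^2(k,T_D).
\]

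First I would identify the outer terms. Hilbert 90 gives $\h^1(k,\Gm)=0$. Since $D \to \Spec(k)$ is finite \'etale, we can write $D = \coprod_i \Spec(k_i)$ for finite separable extensions $k_i/k$, so $T_D = \R_{D/k}\Gm = \prod_i \R_{k_i/k}\Gm$. Shapiro's lemma then gives $\h^j(k,T_D) \simeq \bigoplus_i \h^j(k_i,\Gm)$ for all $j$. In particular, Hilbert 90 again shows $\h^1(k,T_D)=0$, and we have $\h^2(k,T_D) \simeq \bigoplus_i \Br(k_i) = \Br(D)$, while $\h^2(k,\Gm) = \Br(k)$. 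Moreover, one checks that the map $\h^2(k,\Gm) \to \h^2(k,T_D)$ identifies with the natural restriction $\Br(k) \to \Br(D)$ (induced by the structural morphism $D \to \Spec k$). Putting this together, the long exact sequence reduces to
\[
0 \to \h^1(k,S_D) \to \Br(k) \to \Br(D),
\]
which yields $\h^1(k,S_D) = \ker\bigl(\Br(k) \to \Br(D)\bigr) = \Br(D/k)$ by Definition~\ref{defi:relBR}.

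For the second assertion, recall that $\Br(k)$ is automatically torsion, so $\Br(D/k)$ is torsion as well. To obtain the explicit bound by $\ind(D)$, I would use the standard restriction-corestriction argument: for each closed point $y \in D$ with residue field $\kappa(y)$ of degree $\deg(y) = [\kappa(y):k]$, the composition
\[
\Br(k) \xrightarrow{\res} \Br(\kappa(y)) \xrightarrow{\cores} \Br(k)
\]
is multiplication by $\deg(y)$. If $\alpha \in \Br(D/k)$, then by definition $\res(\alpha)=0$ in $\Br(\kappa(y))$ for every closed point $y$, hence $\deg(y)\cdot \alpha = 0$ in $\Br(k)$. Taking the gcd over all closed points of $D$ shows $\ind(D)\cdot \alpha = 0$, so $\h^1(k,S_D) = \Br(D/k)$ is killed by $\ind(D)$. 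No genuine obstacle is expected; the only point to be slightly careful about is the identification of the boundary map with the restriction $\Br(k) \to \Br(D)$, which is routine from functoriality of cohomology applied to the structural morphism $D \to \Spec(k)$ together with Shapiro's identification.
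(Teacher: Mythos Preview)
Your proof is correct and follows essentially the same approach as the paper: both use the long exact sequence attached to $1 \to \Gm \to T_D \to S_D \to 1$, together with Hilbert~90 and Shapiro's Lemma, to identify $\h^1(k,S_D)$ with $\Br(D/k)$, and then a restriction--corestriction argument for the index bound. The paper phrases the latter via the observation that a point $x\in D(k')$ splits the sequence after base change to $k'$, whence $\Br(D/k)\hookrightarrow \Br(k'/k)$, but this is the same argument you give.
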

\begin{proof}
This follows from the Galois cohomology exact sequence associated to 
\begin{equation} \label{eq:definitionofSD}
0 \to \Gm \to T_D = \R_{D/S}(\Gm) \to S_D \to 0,
\end{equation}
Shapiro's Lemma and Hilbert's Theorem 90. For an extension $k'/k$ and $x \in D(k')$, the pullback
\[
x^\ast :  \big(\R_{D/S}(\Gm)\big)_{k'} \to \bG_{\Rm, k'}
\]
splits \eqref{eq:definitionofSD} and so 
\[
\Br(D/k) \inj \Br(k'/k)
\]
which by a corestriction argument is killed by $[k':k]$. Hence $\Br(D/k)$ is killed by $\ind(D)$.
\end{proof}

\section{Extensions of fundamental groups associated to torsors} \label{sec:pi}

\subsection{Extensions of fundamental groups associated to fibrations}
Following \cite{artin-mazur:etalehomotopy} and \cite{Friedlander:etalehomotopysimplicialschemes}  a noetherian scheme $X$ with a geometric point $\bar x$ has  \'etale homotopy groups 
\[
\pi_i^\et(X, \bar x)
\]
for $i \geq 0$. If $X$ is geometrically unibranch, then $\pi_1^\et(X,\bar x) = \pi_1(X,\bar x)$. It is well known that smooth geometrically connected curves $X$ over any field $k$ such that  $X_{\overline k}\not\simeq \bP^1_{\overline k}$  are algebraic $\rK(\pi,1)$ spaces, which in particular means that $\pi_2^\et(X, \bar x) = 0$.

\begin{prop} 
\label{thm:pi1extfortorsor}
Let $k$ be a field of characteristic $0$. 
Let $X/k$ be a geometrically connected and geometrically unibranch variety, let $T/k$ be a torus and $E \to X$ a torsor under $T$. Let $\bar y$ be a geometric point of $E$ with image $\bar x \in X$. 

If $\pi_2^\et(X,\bar x) = 0$, then the sequence
\begin{equation} \label{eq:pi1fortorsors}
1 \to \pi_1(E_{\bar x}, \bar y) \to \pi_1(E,\bar y) \to \pi_1(X,\bar x) \to 1
\end{equation}
is exact.
\end{prop}

\begin{proof}
Because of the exact sequence \eqref{eq:pi1ext} for $E$ and $X$, we may assume that $k$ is algebraically closed. Then $T \simeq \Gm^r$ is a trivial torus, and $E \to X$ is Zariski locally isomorphic to the trivial $\Gm^r$-torsor. Therefore $E \to X$ is a geometric fibration in the sense of  \cite{Friedlander:etalehomotopysimplicialschemes} Definition 11.4. By \cite{Friedlander:etalehomotopysimplicialschemes} Theorem 11.5 we have an exact homotopy sequence
\[
\pi^\et_2(X,\bar x) \to \pi^\et_1(E_{\bar x}, \bar y) \to \pi^\et_1(E,\bar y) \to \pi^\et_1(X,\bar x) \to \pi^\et_0(E_{\bar{x}},\bar y)
\]
and the claim follows from $\pi_2^\et(X,\bar x) = 0$ and $\pi^\et_0(E_{\bar{x}},\bar y) = 0$.
\end{proof}

\begin{rem}
The vanishing assumption for $\pi^\et_2(X, \bar x)$ is indeed important. As an example, we consider an algebraically closed field $\bar k$,   some $n \geq 1$, and the $\Gm$-torsor 
\[
\bA_{\bar k}^{n+1} \setminus \{0\} \to \bP_{\bar k}^n
\]
associated to the line bundle $\dO(1)$. Then by Zariski-Nagata purity of the branch locus we have $\pi_1(\bA_{\bar k}^{n+1} \setminus \{0\}) = 0$ and the sequence
\[
 \pi_1(\bG_{\Rm, \bar k}) \to \pi_1(\bA_{\bar k}^{n+1} \setminus \{0\})  \to \pi_1(\bP_{\bar  k}^n) \to 1
\]
 is not injective on the left. Indeed, the group $\pi_2^\et(\bP_{\bar  k}^n) \not= 0$ does not vanish.
 \end{rem}

\begin{rem}  \label{rmk:pi1modulestructure}
In the situation of Proposition~\ref{thm:pi1extfortorsor}, the choice of $\bar y$ defines  an isomorphism $T_{\bar x} \simeq E_{\bar x}$ by translation, and  the group 
\[
\pi_1(T_{\bar x},1) \simeq  \pi_1(E_{\bar x}, \bar y)
\]
is the fundamental group of an algebraic group (in characteristic $0$, see \cite{stix:habil} \S13.1) and hence abelian (so that we can neglect base points). The conjugation action of $\pi_1(E,\bar y)$ on $\pi_1(E_{\bar x}, \bar y)$ thus defines a $\pi_1(X,\bar x)$-module structure on $\pi_1(E_{\bar x}, \bar y)$.
\end{rem}

\begin{prop} \label{prop:pi1modulestructure}
Let $X$ be a geometrically connected and geometrically unibranch variety over $S = \Spec(k)$ of characteristic $0$, and assume that $\pi_2^\et(X,\bar x) = 0$. 
Let $T/S$ be a torus and $E \to X$ a torsor under $T$.
Then the $\pi_1(X,\bar x)$-action on $\pi_1(E_{\bar x}, \bar y)$ factors over the projection $\pi_1(X,\bar x) \to \Gal_k$ and translation is an isomorphism
\[
\tau \ : \ \bT(T) \simeq \pi_1(T_{\bar k},1)  = \pi_1(T_{X,\bar x},1)  \simeq  \pi_1(E_{\bar x}, \bar y)
\]
where $\bT(T)$ denotes the Tate module of $T$.
\end{prop}
\begin{proof}
The $\pi_1(X,\bar x)$-module structure on $\pi_1(T_{\bar k},1) =  \pi_1(T_{X,\bar x},1) $ associated to $T_X \to X$ comes by functoriality from the action associated to $T \to \Spec(k)$. The action thus factors through the projection $\pi_1(X,\bar x) \to \Gal_k$. The identification  $\bT(T) \simeq \pi_1(T_{\bar k},1)$ as $\Gal_k$-modules is classical, see for example \cite{stix:habil} Section~\S13.1. 

We consider the isomorphism $\Phi: T_X \times_X E \to E \times_X E$ over  $X$ defined by multiplication $T \times_X E \to E$  and second projection $T \times_X E \to E$. By Proposition~\ref{thm:pi1extfortorsor}, we obtain an isomorphism of extensions
\[
\xymatrix{
	0 \ar[r]& \pi_1\big((T_X \times_X E)_{\bar x},(1,\bar y)\big) \ar[d]^{\simeq}  \ar[r] & \pi_1(T_X \times_X E, (1,\bar y)) \ar[d]^{\pi_1(\Phi)}  \ar[r] & \pi_1(X,\bar x) \ar[d]^{\id} \ar[r] & 0 \\
	0 \ar[r]& \pi_1\big((E \times_X E)_{\bar x},(\bar y,\bar y)\big) \ar[r] & \pi_1(E \times_X E, (\bar y, \bar y)) \ar[r] & \pi_1(X,\bar x) \ar[r] & 0
}
\]
Since $\Phi$ is a map over $E$ via second projection, we obtain an isomorphism of $\pi_1(X,\bar x)$-modules
\begin{align*}
\pi_1(T_{\bar k} ,1) & = \ker\Big(\pi_1\big((T_X \times_X E)_{\bar x},(1,\bar y)\big) \xrightarrow{\pi_1(\pr_2)} \pi_1(E_{\bar x}, \bar y) \Big) \\
 & \simeq \ker\Big(\pi_1\big((E \times_X E)_{\bar x},(\bar y,\bar y)\big) \xrightarrow{\pi_1(\pr_2)} \pi_1(E_{\bar x}, \bar y) \Big)  = \pi_1(E_{\bar x}, \bar y)
\end{align*}
 induced by $\Phi$. This completes the proof.
\end{proof}

\subsection{Comparison with the maximal cuspidally central quotient}
\label{sec:comparewithpicc}

The aim of this paragraph is to mention the following interpretation of  $\pi_1(F_D,\bar y)$. 
Let $U = X \setminus D$ be the complement of the support of the divisor and set 
\[
N={\rm Ker} (\pi _1 (U,\bar x) \to \pi _1 (X,\bar x)).
\]
Recall the notion of the maximal \textbf{cuspidally central} quotient $\pi_1^{cc}(U,\bar x)$ due to Mochizuki \cite{mochizuki:abscusp} Definition 1.5(i):  the biggest quotient $\pi _1 ^{cc} (U,\bar x) = \pi_1 (U,\bar x)/N_{cc}$ by a normal subgroup $N_{cc} \subseteq N$ such that one gets an exact sequence
$$
1 \to N^{cc} \to \pi _1 ^{cc} (U,\bar x) \to \pi _1 (X,\bar x) \to 1
$$
where $N^{cc} = N/N_{cc} $ is abelian, and the action of $\pi _1 (X_{\bar k},\bar x) $ by conjugation on $N^{cc}$ is trivial.

\begin{prop}
\label{cc}
The canonical lift $U \to F_D$ of the inclusion $U \subset X$ induces an isomorphism
$$
\pi_1^{cc}(U,\bar x) \simeq \pi_1(F_D, \bar x).
$$ 
\end{prop}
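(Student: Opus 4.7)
The plan is to build the candidate map, show it factors through $\pi_1^{cc}$, and reduce to a single-point computation. The canonical morphism $U\to F_D$ from Remark~\ref{rem:defoftorsors}(3) lifts the inclusion $U\hookrightarrow X$, so the induced map $\pi_1(U,\bar x)\to\pi_1(F_D,\bar y)$ lies over $\pi_1(X,\bar x)$. In particular $N:=\ker\bigl(\pi_1(U,\bar x)\to\pi_1(X,\bar x)\bigr)$ lands in $\pi_1(T_{D,\bar k},1)=\ker\bigl(\pi_1(F_D,\bar y)\to\pi_1(X,\bar x)\bigr)$. The target is abelian (fundamental group of a torus), and by Proposition~\ref{prop:pi1modulestructure}(2) the conjugation action of $\pi_1(X,\bar x)$ on it factors through $\Gal_k$; in particular $\pi_1(X_{\bar k},\bar x)$ acts trivially. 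The defining universal property of the cuspidally central quotient then produces the factorization $\pi_1(U,\bar x)\twoheadrightarrow\pi_1^{cc}(U,\bar x)\to\pi_1(F_D,\bar y)$, giving a morphism of short exact sequences that is the identity on $\pi_1(X,\bar x)$.

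By the five lemma it suffices to prove that the induced map $N^{cc}\to\pi_1(T_{D,\bar k},1)$ is an isomorphism. Both sides are geometric (contained in the kernels of the projections to $\Gal_k$), so after base change to $\bar k$ the question is purely geometric. Over $\bar k$ the divisor $D_{\bar k}$ splits as $\bigsqcup_{q\in D(\bar k)}\{q\}$, and iterating Lemma~\ref{lem:relativeKuennethforpi1} and Lemma~\ref{lem:decomposepi1cc} yields compatible fibered product decompositions of $\pi_1(F_{D_{\bar k}},\bar y)$ and $\pi_1^{cc}(U_{\bar k},\bar x)$ over $\pi_1(X_{\bar k},\bar x)$. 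We may therefore reduce to the case $D=\{q\}$ of a single geometric point.

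In that case $T_{\{q\}}=\Gm$ and, by Lemma~\ref{lem:norms}, $F_{\{q\}}\simeq\BIsom_X(\dO_X,\dO_X(q))$ is the $\Gm$-torsor associated to $\dO(q)$. Using that $\pi_1(U_{\bar k})$ is a free profinite group (so $H_2=0$) while $\pi_1(X_{\bar k})$ is a genus-$g$ surface group with $H_2(\pi_1(X_{\bar k}),\hZ)=\hZ$, the Lyndon--Hochschild--Serre spectral sequence of $1\to N_{\{q\}}\to\pi_1(U_{\bar k})\to\pi_1(X_{\bar k})\to 1$ identifies, via transgression, $N^{cc}_{\{q\}}\xrightarrow{\sim}\hZ$; taking the Galois twist coming from inertia gives $N^{cc}_{\{q\}}\simeq\hZ(1)$, generated by the inertia loop $\gamma_q$ around $q$. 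The target is $\pi_1(\Gm,1)=\hZ(1)$, so it remains to show that $\gamma_q$ is sent to a topological generator. Trivializing $\dO(q)$ near $q$ by a local parameter $t$ identifies $F_{\{q\}}|_V\simeq\Gm\times V$ for a neighborhood $V$ of $q$, and the canonical section $U\to F_{\{q\}}|_U$ becomes $u\mapsto(t(u),u)$, which vanishes to order exactly one at $q$. Hence a small loop around $q$ in $U$ is pushed to a once-around loop of the $\Gm$-fiber, so $\gamma_q$ maps to a topological generator, and the map $\hZ(1)\to\hZ(1)$ is surjective, hence an isomorphism.

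The main obstacle is the last step: one must carefully chase the constructions of Remark~\ref{rem:defoftorsors}(3) and Lemma~\ref{lem:norms} to match the canonical section of $F_{\{q\}}$ with a rational section of the $\Gm$-torsor of $\dO(q)$, and verify that it vanishes to order \emph{exactly} one at $q$. Anything less --- a higher-order zero --- would send $\gamma_q$ to a proper multiple of a generator, breaking the isomorphism at the last step.
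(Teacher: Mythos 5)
Your argument is correct and follows the paper's skeleton step for step: the factorization through $\pi_1^{cc}(U,\bar x)$ via Corollary~\ref{cor:pi1extfortorsor} and Proposition~\ref{prop:pi1modulestructure}, the reduction to showing $N^{cc}\to\pi_1(T_{D,\bar k},1)$ is an isomorphism, the base change to $\bar k$, and the reduction to a single geometric point via Lemma~\ref{lem:relativeKuennethforpi1} and Lemma~\ref{lem:decomposepi1cc} are exactly the paper's moves. You diverge only in the endgame. The paper never identifies an explicit generator of $N^{cc}$: it observes that both sides are abstractly $\hZ(1)$, so surjectivity suffices, and proves surjectivity algebraically by restricting $F_{\{q\}}$ to the strict henselization $\widehat X_q=\Spec(\dO_{X,q}^{\sh})$, where the canonical section becomes the point $\pi\in\Gm(K)$ given by a uniformizer; the pullbacks of the Kummer covers $W^n=V$ are then $\Spec(K[W]/(W^n-\pi))$, connected by Eisenstein, and \cite{sga1} V 6.9 gives the surjectivity. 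You instead pin down $N^{cc}\simeq\hZ(1)$, generated by the inertia loop, via the five-term sequence, and compute the image of that loop by a monodromy argument; this is legitimate in characteristic $0$, but you should say explicitly that you lift to $\bC$ and invoke the comparison theorem (the paper does exactly this in the proof of Lemma~\ref{lem:decomposepi1cc}). The ``order exactly one'' point you flag as the main obstacle is indeed the crux, and it is settled directly by Remark~\ref{rem:defoftorsors}(3): the canonical lift $U\to F_D$ is given by restricting the canonical section $\dO_{X\times_S X}\to\dO_{X\times_S X}(\Delta)$ to $\{q\}\times X$, i.e.\ it is the canonical section of $\dO_X(q)$, which has a simple zero at $q$ --- this is precisely the paper's statement that the section corresponds to a uniformizer (and whether one reads $F_{\{q\}}$ as the torsor of $\dO_X(q)$ or of $\dO_X(-q)$, as in Lemma~\ref{lem:norms} versus Theorem~\ref{thm:crit-torsion-torsors}(2), only changes a sign and not the generation statement). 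Both routes prove the same key fact; the paper's henselian/Eisenstein argument buys independence from any explicit presentation of $N^{cc}$ and stays entirely algebraic, while yours makes the generator visible at the cost of the transgression computation and the comparison with the topological fundamental group.
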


\begin{proof}
If $D$ is totally split, this follows from \cite{mochizuki:abscusp}, Proposition 1.8 iii). Since the claim is of geometric nature, the general case follows. 
\end{proof}


\section{Obstructions to lifting Galois sections along torsors under tori}
\label{sec:obstruction}

In this section, we fix a smooth, proper, geometrically connected curve $X$ of genus at least~$1$ over a field $k$ of characteristic $0$, and $T/k$ a torus. Let $E\to X$ be a torsor under the torus $T$. Then $E$ is also geometrically connected and therefore defines a fundamental exact sequence analogous to \eqref{eq:pi1ext}. The issue we want to address is: does a given section $s:\Gal_k \to \pi_1(X,\bar x)$ of \eqref{eq:pi1ext} lift to a section $\Gal_k \to \pi_1(E,\bar y)$?

\subsection{Arithmetic first Chern class}

We introduce in this paragraph the notion of arithmetic first Chern class of a torsor under a torus. 
The relevance of this notion for anabelian issues has been pointed out by Mochizuki in the case of line bundles 
(see \cite{mochizuki-local} Definition 0.3). Our definition is the straightforward generalization. 

The most logical way to proceed is to use Jannsen's cohomology (see \cite{jannsen:continuous}), that is, cohomology defined on the category of inverse systems of \'etale sheaves of abelian groups on the small \'etale site of the base scheme $X$. In this context, the Kummer short exact sequence takes the following form, for a pair of integers $(m,n)$ such that $m|n$:

\begin{equation}
	\label{eq:Kummer}
\xymatrix{
	0 \ar[r]& \ar[d]^{\frac{n}{m}\cdot} T[n]\ar[r] & T\ar[d]^{\frac{n}{m}\cdot} \ar[r]^{n\cdot} & T \ar[d]^{\id} \ar[r] & 0 \\
	0 \ar[r]& T[m]\ar[r] & T \ar[r]^{m\cdot} & T \ar[r] & 0
}
\end{equation}
Let us define $\mathbb T(T)$ (resp. $(T,\frac{n}{m}\cdot$)) as the inverse system given by the left (resp. middle) column of diagram \eqref{eq:Kummer}. Note that the Jannsen cohomology of the right column is the usual \'etale cohomology of $T$.

\begin{defn}
\label{def:ari_first_Chern}
	The \emph{first arithmetic Chern class} of a $T$-torsor $E \to X$ is the image of its class by the coboundary morphism
	\[
	c_1: \h^1(X,T) \to \h^2(X,\mathbb T(T))
	\]
	in the Jannsen cohomology long exact sequence associated to the short exact sequence \eqref{eq:Kummer}. 	
\end{defn}

\begin{prop}
\label{prop:Chern-short-exact}  
\ 
\begin{enumerate}
\item  \label{item1prop:continuousH2}
The following morphism is an isomorphism:
\[
\h^2(X,\mathbb T(T))\to \varprojlim_{n\in \bN}\h^2(X, T[n]).
\]
\item  \label{item2prop:exactsequencecontinuousH2}
For a torus $T$ over a field $k$, the following sequence is exact:
\[ 
0 \to \h^1(k,T) \xrightarrow{c_1} \h^2\left(k,\mathbb T(T)\right) \to \mathbb T\left(\h^2(k,T)\right)\to 0.
\]
\end{enumerate}
\end{prop}
\begin{proof}
\eqref{item1prop:continuousH2} 
 The short exact $\varprojlim$-sequence (see \cite{jannsen:continuous}, (3.1))
 \begin{equation*}
 0 \to \varprojlim_{n\in \mathbb N} \!^1 \ \h^{1}(X,T[n]) \to \h^{2}\left(X,\mathbb{T}(T)\right) \to  \varprojlim_{n\in \bN}\h^2(X, T[n]) \to 0
 \end{equation*}
reduces assertion (1) to the vanishing of the $\varprojlim^1$-term.  The Kummer sequence provides  the exact sequence
 \[
 0 \to T(X)/nT(X) \to \rH^1(X,T[n]) \to \rH^1(X,T)[n] \to 0,
 \]
 which by Mittag--Leffler and $\varprojlim_{n \in \bN}^2 = 0$ leads to an exact sequence
 \[
 0 = \varprojlim_{n\in \mathbb N} \! ^1 \ T(X)/nT(X) \to \varprojlim_{n\in \mathbb N} \! ^1 \ \h^{1}(X,T[n]) \xrightarrow{\sim} \varprojlim_{n\in \mathbb N} \! ^1 \  \rH^1(X,T)[n] \to 0.
 \]
 The restriction to $n$-torsion of the short exact sequence of low degree terms of the Hochschild--Serre spectral sequence for $X \to \Spec(k)$ and coefficients $T$ yields exactness of 
 \[
 0 \to \rH^1(k,T)[n] \to \rH^1(X,T)[n] \to \rH^1(X_{\bar k},T)[n]^{\Gal_k}.
 \]
 By Hilbert's Theorem 90, the groups $\rH^1(k,T)[n]$ have finite exponent with a bound for the exponent independent of $n$. Thus the system $\bT(\rH^1(k,T))$ is Mittag--Leffler zero (\cite{jannsen:continuous}, 1.10). Since $T_{\bar k} \cong \Gm^d$ for $d = \dim T$ we find non-canonically 
 \[
 \rH^1(X_{\bar k},T) \cong \Pic(X_{\bar k}) ^d
 \]
 which has finite $n$-torsion. Thus the projective system $( \rH^1(X,T)[n] )$ is an extension of a system of finite levels 
 \[
 \left(\im(\rH^1(X,T)[n] \to \rH^1(X_{\bar k},T)[n]^{\Gal_k}\right)
 \]
 by a Mittag--Leffler zero system. Therefore its $\varprojlim^1_{n \in \bN}$ vanishes and the proof is complete.

\eqref{item2prop:exactsequencecontinuousH2} The Kummer sequences for $T$ induces exact sequences
\[
0 \to \rH^1(k,T)/n \rH^1(k,T) \to \rH^2(k,T[n]) \to \rH^2(k, T)[n] \to 0.
\]
Since by Hilbert's Thereom 90 the group $\rH^1(k,T)$ has bounded exponent, we have 
\begin{align*}
\varprojlim_{n\in \mathbb N} \rH^1(k,T)/n \rH^1(k,T)  & = \rH^1(k,T) \\
\varprojlim_{n\in \mathbb N} \!^1 \ \rH^1(k,T)/n \rH^1(k,T)  & = 0.
\end{align*}
Therefore assertion \eqref{item2prop:exactsequencecontinuousH2} follows by passing to the limit and 
\eqref{item1prop:continuousH2}  for $X = \Spec(k)$.
\end{proof}

\begin{rem}	
\label{rmk:modnchernsuffice}
	Because the morphism $\h^2(X,\mathbb T(T))\to \varprojlim_{n\in \bN}\h^2(X, T[n])$ is an isomorphism,
 it will be often sufficient to consider only Chern classes modulo $n$, denoted by $c_1(E)_n$, and defined as the image of $c_1(E)$ by the morphism $\h^2(X,\mathbb T(T))\to \h^2(X, T[n])$. 
\end{rem}

\subsection{Class of the fibration}

Let  $h:E \to X$ be  a $T$-torsor, and consider the exact sequence
\begin{equation}
\label{eq:hom-exact-seq}
1 \to \pi_1(T_{\bar k},1) \to \pi_1(E,\bar y) \to \pi_1(X,\bar x) \to 1
\end{equation}
from Proposition~\ref{thm:pi1extfortorsor} where we use the isomorphism $\pi_1(T_{\bar k},1) \simeq \pi_1(E_{\bar x},\bar y)$ of 
Proposition~\ref{prop:pi1modulestructure}. This yields an abelian cohomology class 
\[
\pi_1(E/X) \in \h^2(\pi_1(X,\bar x),\pi_1(T_{\bar k},1)).
\]
The section $s$ lifts to (a Galois section of) $E$ if and only if $s^*(\pi_1(E/X))=0$ in $\h^2(\Gal_k,\pi_1(T_{\bar k},1))$. We are therefore interested in an explicit description of the class $\pi_1(E/X)$.

\begin{prop}
\label{prop:inj+coincidence} 
Let  $X$ be a geometrically connected and geometrically unibranch variety over a field $k$ of characteristic $0$. Assume that $\pi_2^\et(X,\bar x) = 0$. 

Then 
the morphism $\h^2(\pi_1(X,\bar x),\pi_1(T_{\bar k},1)) \to \h^2(X,\mathbb T(T))$ is an isomorphism and sends $\pi_1(E/X)$ to $c_1(E)$.	
\end{prop}

\begin{proof}
Let $\tilde{X}_\et$ be the universal covering of the \'etale homotopy type $X_\et$ with respect to the geometric point $\bar x \in X$. Then for any locally constant constructible torsion sheaf $\cF$ we have isomorphisms
\begin{align*}
\rH^0(\tilde{X}_\et, \cF) & \simeq  \cF_{\bar x}, \\
\rH^1(\tilde{X}_\et, \cF) & = 0, \\
\rH^2(\tilde{X}_\et, \cF) & \simeq \Hom(\pi_2^\et(X,\bar x), \cF_{\bar x}).
\end{align*}
The exact sequence of low degree terms for the Leray spectral sequence for $\tilde{X}_\et \to X_\et$ yields
\[
 \rH^1(\pi_1(X,\bar x),\cF_{\bar x}) \simeq \rH^1(X,\cF)
\]
and the exact sequence
\[
0 \to \rH^2(\pi_1(X,\bar x),\cF_{\bar x}) \to \rH^2(X,\cF) \to \Hom_{\pi_1(X,\bar x)}( \pi_2^\et(X,\bar x), \cF_{\bar x}).
\]
For $\cF = T[n]$, using the short exact $\varprojlim$-sequence (see \cite{jannsen:continuous}, (3.1)) shows that 
\[
\h^2(\pi_1(X,\bar x),\pi_1(T_{\bar k},1))  \simeq \h^2(\pi_1(X,\bar x),\mathbb{T}(T)) \to \h^2(X,\mathbb T(T))
\]
is an isomorphism.

We now prove the second claim based on ideas by Mochizuki \cite{mochizuki:topics} Lemma~4.4+5. 
Denote by $\Et(X)$  the small \'etale site of $X$, and by  $\FEt(X)$ the finite \'etale site of $X$. 
The natural morphism $\gamma: \Et(X)\to \FEt(X)$ induces for each sheaf  $\cF$ on $\FEt(X)$ and the corresponding  locally constant sheaf $\gamma^\ast \cF$ on $\Et(X)$ and each $i \geq 0$ a morphism 

\[\rH^i(\pi_1(X,\bar x),\cF_{\bar x}) \to \rH^i(X,\gamma^\ast \cF) \] 
via the usual identification of sheaves on $\FEt(X)$ and representations of $\pi_1(X,\bar x)$.
Note that this coincides with the edge map of the spectral sequence used above.

Now the fibration $E \to X$ gives rise to a commutative diagram of sites:
\[
	\xymatrix{
	\Et(E)\ar[r] \ar[d]& \ar[d]\FEt(E)\\	
	\Et(X)\ar[r] & \FEt(X)
	}
\] 
This diagram induces in turn a morphism of Leray-Serre spectral sequences associated to the sheaf $\mathbb T(T)$ corresponding to the $\pi_1(E)$-representation $\pi_1(T_{\bar k},1)$ (we omit base points for readability sake):
\[
\xymatrix{
\rH^p(\pi_1(X),\rH^q(\pi_1(T_{\bar k}),\pi_1(T_{\bar k})))  \ar[r] \ar@{=>}[d] & \rH^p(X,\rH^q(T_{\bar k},\mathbb T(T)))  \ar@{=>}[d]  \\ 
 \rH^{p+q}(\pi_1(E),\pi_1(T_{\bar k}))   \ar[r] &  \rH^{p+q}(E,\mathbb T(T))
}
\] 
The transgression morphisms $d_2^{0,1} : E_2^{0,1}\to E_2^{2,0}$ are thus compatible in the sense that the following diagram commutes:
\[
	\xymatrix{
		\End_{\Gal_k}(\pi_1(T_{\bar k}))\ar[r]^(0.55){\simeq} \ar[d]^{d_2^{0,1}} &\End_k(\mathbb T(T)) \ar[d]^{d_2^{0,1}} \\
\rH^2(\pi_1(X),\pi_1(T_{\bar k}))\ar[r] & \rH^2(X,\mathbb T(T))
	}
\] 
The top arrow is induced by $\pi_1(T_{\bar k}) \simeq \mathbb{T}(T)$, in particular it sends $\id$ to $\id$. 
Now the profinite version of \cite[III, Theorem 4]{hochschild-serre-cohomology} implies that the left vertical map sends $\id$ to $-\pi_1(E/X)$ and a similar argument with Cech cocycles shows that the right vertical map sends $\id$ to $-c_1(E)$.
\end{proof}

\subsection{Killing torsion obstructions} 
\label{sec:kill_tors_obs}

By Proposition~\ref{prop:inj+coincidence} we associate to a $T$-torsor $E\to X$ and a section $s:\Gal_k \to \pi_1(X,\bar x)$ a class 
\[
s^*(c_1(E)) : = s^\ast(\pi_1(E/X)) \in \h^2(k,\mathbb T(T)).
\] 

\begin{lem}
\label{lem:kill_tors_obs}
$s^*(c_1(E))=0$ if and only if $s$ lifts to $\pi_1(E,\bar y)$. 
\end{lem}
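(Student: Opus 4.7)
The plan is to frame this as a textbook application of the cohomological obstruction to splitting a group extension with abelian kernel. By Corollary~\ref{cor:pi1extfortorsor} we have the extension
\[
1 \to \pi_1(T_{\bar k},1) \to \pi_1(E,\bar y) \to \pi_1(X,\bar x) \to 1,
\]
whose kernel is abelian, carries the $\pi_1(X,\bar x)$-module structure described in Proposition~\ref{prop:pi1modulestructure}, and whose extension class is by definition $\pi_1(E/X) \in \h^2(\pi_1(X,\bar x),\pi_1(T_{\bar k},1))$. A lift of the Galois section $s$ to $\pi_1(E,\bar y)$ is, by the universal property of pullbacks, exactly a group-theoretic splitting of the pulled-back extension
\[
1 \to \pi_1(T_{\bar k},1) \to s^\ast \pi_1(E,\bar y) \to \Gal_k \to 1.
\]

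Next, invoke the classical fact that an extension of a group $H$ by an abelian $H$-module $A$ splits if and only if its class in $\h^2(H,A)$ is trivial. Applied to $H=\Gal_k$ and $A=\pi_1(T_{\bar k},1)$, this gives that a lift of $s$ exists if and only if the pullback class $s^\ast(\pi_1(E/X))$ vanishes in $\h^2(\Gal_k,\pi_1(T_{\bar k},1))$. Here the pullback of extension classes along $s$ corresponds to the usual $s^\ast$ on group cohomology, a compatibility I would simply quote.

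Finally, translate the statement into the form asserted in the lemma: Proposition~\ref{prop:inj+coincidence} identifies $\pi_1(E/X)$ with $c_1(E)$ under the isomorphism $\h^2(\pi_1(X,\bar x),\pi_1(T_{\bar k},1)) \simeq \h^2(X,\mathbb T(T))$, so $s^\ast(\pi_1(E/X))$ is precisely $s^\ast(c_1(E)) \in \h^2(k,\mathbb T(T))$ under the tautological identification $\mathbb T(T)=\pi_1(T_{\bar k},1)$. Since each step is entirely formal, I do not expect any real obstacle: the only mild subtlety is to make sure the base-point conventions and the $\pi_1(X,\bar x)$-module structure used in forming $\pi_1(E/X)$ coincide with those implicitly fixed in the definition of $s^\ast(c_1(E))$, which is exactly what Proposition~\ref{prop:pi1modulestructure} and Proposition~\ref{prop:inj+coincidence} have already arranged.
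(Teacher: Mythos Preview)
Your argument is correct and is exactly the paper's approach: the paper's proof is the single line ``This follows from Proposition~\ref{prop:inj+coincidence}'', relying on the definition $s^\ast(c_1(E)) := s^\ast(\pi_1(E/X))$ given just before the lemma, together with the standard fact that an extension with abelian kernel splits if and only if its class vanishes. You have simply unpacked this in detail.
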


\begin{proof}
	This follows from Proposition~\ref{prop:inj+coincidence}.
\end{proof}

We can make this obstruction more tractable thanks to the following lemma.

\begin{lem}
\label{lem:Tate-torsion-free}
The Tate module $\mathbb T(A)$ of an abelian group $A$ is torsion free.	
\end{lem}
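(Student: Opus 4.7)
The plan is to unwind the definition $\mathbb T(A) = \varprojlim_n A[n]$ together with the description of the transition maps, after which the statement becomes a short direct verification.

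Recall from diagram \eqref{eq:Kummer} that the transition map $A[n] \to A[m]$ for $m \mid n$ is multiplication by $n/m$. Hence an element $x = (x_n)_{n \geq 1} \in \mathbb T(A)$ is precisely a compatible family with
\[
\tfrac{n}{m}\cdot x_n = x_m \qquad \text{whenever } m \mid n.
\]

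Now suppose $m\cdot x = 0$ for some positive integer $m$; I want to conclude that $x_n = 0$ for every $n \geq 1$. Fix an arbitrary $n$ and look at the component $x_{mn} \in A[mn]$. On the one hand, by compatibility of the family, $x_n = m \cdot x_{mn}$. On the other hand, the hypothesis $m\cdot x = 0$ says in particular that $m \cdot x_{mn} = 0$. Combining these gives $x_n = 0$, and since $n$ was arbitrary, $x = 0$.

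There is no real obstacle here: the only subtle point is remembering that the transition maps in $\mathbb T(A)$ are multiplication maps (not just inclusions of torsion subgroups), which is exactly what makes multiplication by $m$ on the $m n$-th term produce the $n$-th term and thereby kill it.
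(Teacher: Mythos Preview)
Your proof is correct. The paper's own proof is the one-liner ``This is clear from the expression $\mathbb T(A)=\Hom(\mathbb Q/\mathbb Z, A)$,'' using that $\mathbb Q/\mathbb Z$ is divisible: if $m\cdot f=0$ then for any $q$ write $q=m q'$ and conclude $f(q)=m f(q')=0$. Your argument is the same idea unwound at the level of the inverse system: the element $x_{mn}$ plays the role of $f(q')$, and the transition map (multiplication by $m$) plays the role of divisibility of $\mathbb Q/\mathbb Z$. The paper's version is slicker once one knows the identification $\mathbb T(A)\simeq \Hom(\mathbb Q/\mathbb Z,A)$; yours is more self-contained and makes explicit the one point that matters, namely that the transition maps are multiplication rather than inclusion.
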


\begin{proof}
	This is clear from the expression $\mathbb T(A)=\Hom(\mathbb Q/\mathbb Z, A)$.
\end{proof}

If $E \to X$ is torsion, so is $s^*(c_1(E))$, and Proposition~\ref{prop:Chern-short-exact} together with Lemma~\ref{lem:Tate-torsion-free} show that the obstruction $s^*(c_1(E))$ lives in fact in $\h^1(k,T)$.
\smallskip

\section{Lifting  \texorpdfstring{to $F_D$}{} over the rationals}
\label{sec:lifting}

In this section, $k$ will be a field of characteristic $0$ and $X/k$ a smooth, projective, geometrically connected curve of  genus $\geq 1$. Let $D \subset X$ be an effective reduced Cartier divisor and $U = X\setminus D$. We consider the associated torsors $F_D \to X$ and $E_D \to X$ from Definition~\ref{def_torsors} and study the lifting obstruction for sections $s : \Gal_k \to \pi_1(X,\bar x)$ to the fundamental group of the respective torsors.

\subsection{Vanishing of Brauer obstructions}
The short exact sequence of low degree terms of the Leray spectral sequence for $X \to \Spec(k)$ and $\Gm$ reads
\begin{equation} 
\label{eq:defineBrauerObstructionMap}
0 \to \Pic(X) \to \Pic_{X/k}(k) \xrightarrow{b} \Br(k) \to \Br(X).
\end{equation}
By definition the map $b$ is the Brauer obstruction map with values in the relative Brauer group  $\Br(X/k)$, see Definition~\ref{defi:relBR}, 
that measures the failure of a rational point of the Picard variety to describe an actual line bundle.

\begin{prop}
\label{prop:brauer_obstruction_and_sections}
Let $X/k$ be a smooth projective curve of positive genus such that $\pi_1(X/k)$ admits a section. Then the following holds.
\begin{enumerate}
\item $b(L) = 0$ for all torsion points $L \in \Pic_{X/k}(k)_{\tors}$.
\item If $k/\bQ_p$ is a finite extension, then $\# \Br(X/k)$ is a power of $p$.
\item If $k=\bR$, then $\Br(X/k) = 0$.
\end{enumerate}
\end{prop}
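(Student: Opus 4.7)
The plan is to treat the three parts via a common retraction: since $X$ has positive genus, Example~\ref{ex:Kpi1examples}(3) makes $X$ an algebraic $\rK(\pi,1)$, so the section $s$ provides a retraction $s^\ast \colon \rH^i(X, \mathcal F) \to \rH^i(k, F)$ for every locally constant torsion sheaf $\mathcal F$ with stalk $F$; in particular the pullback $\rH^i(k, F) \to \rH^i(X, F)$ is split-injective.

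For part (1), I will unpack the five-term exact sequence of the Hochschild--Serre spectral sequence of $X \to \Spec(k)$ with coefficients $\mu_n$. The transgression
\[
d_2 \colon \rH^0(k, \Pic(X_{\bar k})[n]) \to \rH^2(k, \mu_n)
\]
coincides, via the Kummer sequence and the standard edge comparison, with the restriction of $b$ to $\Pic_{X/k}(k)[n] = \Pic(X_{\bar k})[n]^{\Gal_k}$. Split-injectivity of the subsequent edge map $\rH^2(k, \mu_n) \to \rH^2(X, \mu_n)$ forces $d_2 = 0$, and letting $n$ range over all positive integers yields $b(\Pic_{X/k}(k)_{\tors}) = 0$.

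For part (3), I invoke the implication $(b) \Longrightarrow (c)$ of Theorem~\ref{thm:compare_conditions}. By Corollary~\ref{cor:Chern-short-exact} applied to $T = \Gm$, the target of $s^\ast \circ c_1$ is
\[
\rH^2(\bR, \hZ(1)) = \bT(\Br(\bR)) = \bT(\bZ/2).
\]
Since multiplication by $2$ is the zero map on $\bZ/2$, every transition map in the Tate-module inverse system vanishes, so $\bT(\bZ/2) = 0$. Thus $(b)$ holds trivially, and Theorem~\ref{thm:compare_conditions} yields $\Br(X/\bR) = 0$.

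For part (2), I combine (1) with the $p$-adic Lie structure of $J(k) = \Pic^0_{X/k}(k)$. After (1), $b$ factors through $\Pic_{X/k}(k)/\Pic_{X/k}(k)_{\tors}$. A splitting of the degree sequence $0 \to J(k) \to \Pic_{X/k}(k) \to \pe(X)\bZ \to 0$ decomposes this torsion-free quotient as $J(k)/J(k)_{\tors} \oplus \bZ \cdot P$, with $P$ of degree $\pe(X)$. Since $J(k)/J(k)_{\tors}$ is topologically $\bZ_p^g$, the continuous homomorphism $b\vert_{J(k)/J(k)_{\tors}}$ into the discrete group $\Br(k) = \bQ/\bZ$ has kernel of $p$-power index and hence $p$-primary image. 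For the degree contribution, Lichtenbaum's theorem $\pe(X) = \ix(X)$ for smooth projective curves over $p$-adic local fields supplies a line bundle $\mathcal L \in \Pic(X)$ of degree $\pe(X)$; then $P - \mathcal L \in J(k)$, so $b(P) = b(P - \mathcal L) \in b(J(k))$. Combining, $\Br(X/k) = b(J(k))$ is $p$-primary. The main obstacle is part (2), where we rely on Lichtenbaum's equality $\pe = \ix$ as an external arithmetic input, and on continuity of $b$ on $J(k)$, which follows from the continuous formulation of the Leray spectral sequence but requires some care to set up.
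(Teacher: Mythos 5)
Parts (1) and (3) of your proposal are correct, and both differ from the paper's treatment: for (1) the paper simply cites \cite{stix:periodindex} Proposition 12, whereas you give a self-contained argument (the transgression for $\mu_n$ is killed because the section plus the $\rK(\pi,1)$ property of Example~\ref{ex:Kpi1examples}(3) splits $\rH^2(k,\mu_n)\to\rH^2(X,\mu_n)$, and compatibility of the Hochschild--Serre sequences for $\mu_n\to\Gm$ identifies its image with $b$ on $n$-torsion); for (3) the paper invokes the real section conjecture, while you deduce it from Theorem~\ref{thm:compare_conditions} together with $\rH^2(\bR,\hZ(1))\simeq \bT(\Br(\bR))=\bT(\bZ/2)=0$. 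Note that Theorem~\ref{thm:compare_conditions} appears later in the paper and its implication $(a)\Rightarrow(c)$ uses part (1) of the very proposition you are proving, so your ordering (prove (1) directly, then use the theorem for (3)) is what saves you from circularity; you should say this explicitly, but the logic is sound, and your route has the merit of avoiding the real section conjecture as an input.

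Part (2), however, has a genuine gap: the asserted ``Lichtenbaum's theorem $\pe(X)=\ix(X)$ for smooth projective curves over $p$-adic local fields'' is false. Lichtenbaum's results give $\pe(X)\mid \ix(X)\mid \pe(X)^2$ in genus one, and both extremes occur: there are genus-one curves over $\bQ_p$ of period $n$ and index $n^2$, so no line bundle of degree $\pe(X)$ need exist on $X$ without further hypotheses. What is true is that period equals index (and is a power of $p$) \emph{in the presence of a section}, but that is essentially the content of \cite{stix:periodindex} Theorem 15, i.e.\ of the statement you are trying to prove; assuming it begs the question. Concretely, your reduction of $b(P)$ (for $P$ of degree $\pe(X)$) to $b(J(k))$ collapses: without $\ix(X)\mid\pe(X)$ there is no $\cL\in\Pic(X)$ with $P-[\cL]\in J(k)$, and nothing in your argument bounds the prime-to-$p$ part of the order of $b(P)$ --- which is exactly where the cited reference does real work. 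The part of your argument concerning $b(J(k))$ is fine, and the continuity issue you flag can in fact be sidestepped: the image of $b$ lies in $\Br(X/k)$, which is killed by $\ix(X)$ by a corestriction argument and hence finite, and any abstract finite quotient of $J(k)/J(k)_{\tors}\cong\bZ_p^N$ is a $p$-group since $\bZ_p^N$ is $\ell$-divisible for $\ell\neq p$. But the degree-$\pe(X)$ contribution remains unproved, so part (2) as written does not stand.
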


\begin{proof}
Assertion (1) is \cite{stix:periodindex} Proposition 12, and (2) is proven in  \cite{stix:periodindex} Theorem 15. 
Assertion (3) follows  from the real section conjecture, see \cite{stix:habil} \S16.1. Indeed, any section $s : \Gal_\bR \to \pi _1 (X, \bar x)$ comes from a point $x \in X(\bR)$, and evaluation in $x$ yields a retraction to $\Br(k) \to \Br(X)$, showing $\Br(X/k) = 0$.
\end{proof}

\begin{cor} \label{cor:b_vanishes_overQ}
Let $X/\bQ$ be a smooth projective curve of positive genus such that $\pi_1(X/\bQ)$ admits a section.  Then the relative Brauer group 
$\Br(X/\bQ)$ vanishes. 
\end{cor}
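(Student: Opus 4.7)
My proof proposal is a classical local-global argument using the three items of Proposition~\ref{prop:brauer_obstruction_and_sections} combined with the Albert--Brauer--Hasse--Noether reciprocity sequence
\[
0 \to \Br(\bQ) \to \bigoplus_v \Br(\bQ_v) \xrightarrow{\sum \inv_v} \bQ/\bZ \to 0.
\]
Let $\alpha \in \Br(X/\bQ) \subseteq \Br(\bQ)$; I aim to show that $\inv_v(\alpha) = 0$ for every place $v$ of $\bQ$, which by reciprocity forces $\alpha = 0$.

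First I would verify that the setting descends to each completion. For every place $v$, fix a compatible embedding $\ov{\bQ} \inj \ov{\bQ}_v$. The base-change morphism $\pi_1(X_{\bQ_v}) \to \pi_1(X_\bQ) \times_{\Gal_\bQ} \Gal_{\bQ_v}$ is an isomorphism (using that $X$ is geometrically connected and that the geometric fundamental group is a topological invariant in characteristic $0$), so the given section $s \colon \Gal_\bQ \to \pi_1(X_\bQ)$ restricts to a section $s_v \colon \Gal_{\bQ_v} \to \pi_1(X_{\bQ_v})$. Meanwhile, base change of $\alpha$ places the class $\alpha_v$ inside $\Br(X_{\bQ_v}/\bQ_v)$, since $\alpha$ already dies in $\Br(X)$.

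Next I would apply the proposition place by place. At $v = \infty$, Proposition~\ref{prop:brauer_obstruction_and_sections}(3) gives $\Br(X_\bR/\bR) = 0$, hence $\inv_\infty(\alpha) = 0$. At a finite prime $v = p$, Proposition~\ref{prop:brauer_obstruction_and_sections}(2) gives $\#\Br(X_{\bQ_p}/\bQ_p) = p^{N_p}$ for some $N_p \geq 0$, so the order of $\alpha_p$ (hence of $\inv_p(\alpha) \in \bQ/\bZ$) is a power of $p$.

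Finally, I would exploit that $\bQ/\bZ = \bigoplus_\ell \bQ_\ell/\bZ_\ell$ decomposes into $\ell$-primary components and that $\sum_v \inv_v(\alpha) = 0$. For any fixed prime $\ell$, the only summand that can contribute a nonzero $\ell$-primary part is $\inv_\ell(\alpha)$, since for $p \neq \ell$ finite the $\ell$-primary component of $\inv_p(\alpha)$ vanishes by the previous paragraph, and $\inv_\infty(\alpha) = 0$ outright. Projecting reciprocity to $\bQ_\ell/\bZ_\ell$ therefore gives $\inv_\ell(\alpha) = 0$. This holds for every prime $\ell$, so $\alpha$ has trivial invariant at every place and $\alpha = 0$, as desired.

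The only step requiring a modest amount of care is the reduction to local data, namely ensuring that the hypotheses of Proposition~\ref{prop:brauer_obstruction_and_sections} genuinely apply to $X_{\bQ_v}/\bQ_v$ --- that is, that the global section $s$ produces a local section for each $v$. Once this is checked, the rest is a bookkeeping argument on invariants, and the proof is short.
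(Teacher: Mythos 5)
Your proposal is correct and is essentially the paper's own argument: both use the Hasse--Brauer--Noether reciprocity sequence, restrict the section to each completion $\bQ_v$ to invoke Proposition~\ref{prop:brauer_obstruction_and_sections}, kill the real invariant outright and note that each finite invariant $\inv_p(\alpha)$ is $p$-primary, whence all invariants vanish. Your extra bookkeeping with the $\ell$-primary decomposition of $\bQ/\bZ$ just spells out the final step the paper leaves implicit.
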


\begin{proof}
The Hasse--Brauer--Noether theorem shows that 
\[
\Br(X/\bQ) \inj  \ker\Big(\bigoplus_{v} \Br(X \times_\bQ \bQ_v/\bQ_v)  \xrightarrow{\sum_v \inv_v} \bQ/\bZ\Big)
\]
is injective, where $v$ ranges over all places of $\bQ$. Base change of sections implies that for all places $v$ of $\bQ$ the extension $\pi_1(X \times_\bQ \bQ_v/\bQ_v)$ splits. By Proposition~\ref{prop:brauer_obstruction_and_sections} then $\Br(X\times_\bQ \bR/\bR) = 0$ and $\Br(X \times_\bQ \bQ_p/\bQ_p)$ is cyclic of $p$-power order. This forces $\Br(X/\bQ) = 0$.
\end{proof}

\begin{rem} The proof of Corollary \ref{cor:b_vanishes_overQ} breaks down if one replaces $\bQ$ by any number field $k$, because a general $k$ has different places with the same residue characteristic. This is the point which limits Theorem \ref{thm:mainresult?} to the case of curves over $\bQ$.
\end{rem}

\subsection{Divisibility of line bundles} \label{sec:divinnh}

Let us recall the definition of a neighbourhood of a section.

\begin{defn}
A \textbf{neighbourhood} of a section $s: \Gal_k \to \pi_1(X,\bar x)$ is a connected finite \'etale cover 
\[
h: X' \to X
\]
together with a lift 
\[
s' : \Gal_k \to \pi_1(X',\bar x') \subseteq \pi_1(X,\bar x)
\]
of $s$. A short notation for a neighbourhood is $(X',s')$. 
\end{defn}

Neighbourhoods are geometrically connected over $k$,  because $\pi_1(X',\bar x') \to \Gal_k$ is surjective.

\begin{example} \label{ex:neighbourhoods}
A wealth of neighbourhoods are constructed as follows. Let $\ph : \pi_1(X_{\bar k},\bar x) \surj G$ be a characteristic finite quotient. Then $\ker(\ph)$ is a normal subgroup in $\pi_1(X,\bar x)$ and 
\[
\pi_1(X_\ph,\bar x) = \langle \ker(\ph),  s(\Gal_k) \rangle = \{\gamma s(\sigma) \ ; \ \ph(\gamma) = 1, \ \sigma \in \Gal_k\} \subseteq \pi_1(X,\bar x) 
\]
together with the obvious lift describes a neighbourhood of $s$. Moreover, we have $\pi_1(X_{\ph,\bar k},\bar x) = \ker(\ph)$, so that 
\[
\deg(X_\ph \to X) = \#G.
\]
Since $\pi_1(X_{\bar k},\bar x)$ is topologically finitely generated, the neighbourhoods $X_\ph$ form a cofinal system in the system $X_s = (X')$ of all neighbourhoods.
\end{example}

\begin{prop} 
\label{prop:locdiv}
Let $X/k$ be a smooth projective curve of positive genus. Let $s: \Gal_k \to \pi_1(X,\bar x)$ be a section and let $\dL$ be a line bundle on $X$.  Then the following holds.
\begin{enumerate}
\item For every $n \geq 1$ there is a neighbourhood $(X',s')$ of $s$, such that there is a $M \in \Pic_{X'}(k)$ with 
\[
[\dL|_{X'}] = M^{\otimes n}.
\]
\item If, moreover, for every neighbourhood $(X',s')$ of $s$, the relative Brauer group $\Br(X'/k)$ vanishes, then for every $n \geq 1$ there is a neighbourhood $(X',s')$ of $s$, such that there is a line bundle $\dM$ on $X'$  with 
\[
\dL|_{X'} \simeq \dM^{\otimes n}.
\]
\end{enumerate}
\end{prop}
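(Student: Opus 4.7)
The strategy for (1) is to construct an explicit neighbourhood of $s$ on which $\dL$ becomes an $n$-th power in the Picard scheme, by exploiting the canonical isogeny $[n] : J \to J$ pulled back via the Albanese map. Part (2) will then follow immediately from (1) together with the low-degree Leray exact sequence
\[
0 \to \Pic(X') \to \Pic_{X'/k}(k) \xrightarrow{b} \Br(X'/k)
\]
and the Brauer vanishing hypothesis.

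\textbf{The key construction.} I would take the characteristic finite \'etale cover $X_n \to X$ corresponding to the quotient $\pi_1(X_{\bar k})^{\ab}/n \cong J[n]$, where $J = \Pic^0_{X/k}$; it has degree $n^{2g}$, and since the quotient is characteristic, the section $s$ lifts to a section $s_n$, making $(X_n, s_n)$ a neighbourhood of $s$. Over $\bar k$, I would choose any base point $\bar x_0 \in X(\bar k)$; the Albanese map $\mathrm{alb} : X_{\bar k} \to J_{\bar k}$ then identifies $X_{n, \bar k}$ with the fiber product $X_{\bar k} \times_{J_{\bar k}, [n]} J_{\bar k}$ and furnishes a projection $q : X_{n,\bar k} \to J_{\bar k}$ satisfying $\mathrm{alb} \circ \pi = [n] \circ q$ with $\pi : X_n \to X$. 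For $\dL \in \Pic^0(X)$, let $\dL' \in \Pic^0(J_{\bar k})$ be the unique class with $\mathrm{alb}^* \dL' = \dL|_{\bar k}$ (using that $\mathrm{alb}^*$ is an isomorphism on degree-zero parts). The classical identity $[n]^* \dL' = \dL'^{\otimes n}$ for $\dL' \in \Pic^0$ of an abelian variety yields
\[
\pi^* \dL|_{\bar k} = (\mathrm{alb} \circ \pi)^* \dL' = ([n] \circ q)^* \dL' = (q^* \dL')^{\otimes n}.
\]
Setting $\bar M := q^* \dL'$, its isomorphism class in $\Pic(X_{n,\bar k})$ is independent of the choice of $\bar x_0$ (a change of base point alters $\mathrm{alb}$ and $q$ only by a translation of $J$, under which $\Pic^0(J)$ is invariant), hence is $\Gal_k$-stable; it thus defines a class $M \in \Pic_{X_n/k}(k)$ with $nM = [\dL|_{X_n}]$, proving (1) for $\dL \in \Pic^0(X)$.

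\textbf{General degree and part (2).} For a general $\dL$ of degree $d$, I would first pass to a neighbourhood of $s$ admitting a $k$-rational divisor class $\dO(D)$ of degree $1$ (or at least prime to $n$), writing $\dL|_{X'} = \dL_0 \otimes \dO(D)^{\otimes d}$ with $\dL_0 \in \Pic^0$, and then apply the key construction to the degree-zero summand $\dL_0$ on a further Albanese cover; after passage to $X_n$ the degree of the pullback of $\dO(D)$ is $n^{2g}$ and the corresponding piece becomes $n$-divisible tautologically in $\Pic_{X_n/k}(k)$. Existence of such a low-degree rational class follows from period-index considerations for Galois sections, cf.\ \cite{stix:periodindex} and Proposition~\ref{prop:brauer_obstruction_and_sections}. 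For (2), given $(X', s')$ and $M \in \Pic_{X'/k}(k)$ from (1) with $nM = [\dL|_{X'}]$, the Brauer obstruction $b(M) \in \Br(X'/k)$ vanishes by hypothesis, so $M$ lifts to a line bundle $\dM \in \Pic(X')$; the map $\Pic(X') \hookrightarrow \Pic_{X'/k}(k)$ is injective by Hilbert 90, and $\dM^{\otimes n}$ and $\dL|_{X'}$ share the same class in $\Pic_{X'/k}(k)$, whence $\dM^{\otimes n} \simeq \dL|_{X'}$.

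\textbf{Main obstacle.} The technically delicate step is the general-degree reduction, where one must invoke period-index results for Galois sections to secure a neighbourhood carrying a rational divisor class of degree prime to $n$. The algebraic heart of the argument, by contrast, is a clean Albanese computation resting on $[n]^* \dL' = \dL'^{\otimes n}$ on $\Pic^0(J)$ and on the translation-invariance of $\Pic^0(J)$, which together enforce both the existence of the $n$-th root $\bar M$ and its Galois invariance; everything else is functorial.
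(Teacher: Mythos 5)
Your part (2) and your degree-zero construction are fine: the Albanese computation with $[n]^\ast\dL' \simeq \dL'^{\otimes n}$ on $\Pic^0(J)$ is a correct (if heavier) geometric incarnation of the relevant fact, namely that pulling back to the maximal geometric abelian cover of exponent $n$ kills the Kummer obstruction. The genuine gap is in your reduction from arbitrary degree to degree zero. First, the assertion that after pulling back to $X_n$ the piece $\dO(D)^{\otimes d}$ ``becomes $n$-divisible tautologically in $\Pic_{X_n/k}(k)$'' because its degree is divisible by $n$ is false as stated: divisibility of the degree gives $n$-divisibility only in $\Pic(X_{n,\bar k})$, and the obstruction to descending an $n$-th root to a class in $\Pic_{X_n/k}(k)$ lives in $\rH^1\big(k,\Pic_{X_n/k}[n]\big)$ and does not vanish for free --- controlling exactly this obstruction is the entire content of part (1). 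Second, the existence of a neighbourhood of $s$ carrying a $k$-rational divisor class of degree $1$ (or prime to $n$) is not justified: the proposition is stated over an arbitrary field $k$ of characteristic $0$, and the cited period--index results (Proposition~\ref{prop:brauer_obstruction_and_sections}, \cite{stix:periodindex}) concern torsion classes and relative Brauer groups over local fields and $\bQ$; they do not produce a prime-to-$n$ degree class on some neighbourhood over general $k$. So the ``technically delicate step'' you flag is not merely delicate --- as written it does not go through.

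The paper's proof needs neither ingredient and treats all degrees uniformly. It uses the exact sequence of $k$-group schemes $0 \to \Pic_{X/k}[n] \to \Pic_{X/k} \to \Pic_{X/k}^{n\ast} \to 0$, where $\Pic_{X/k}^{n\ast}$ is the subvariety of classes of degree divisible by $n$, whose boundary map $\Pic^{n\ast}_{X/k}(k) \to \rH^1(k,\Pic_{X/k}[n])$ is the obstruction to $n$-divisibility and is natural under pullback. One first passes to a neighbourhood $(X_1,s_1)$ with $n \mid \deg(X_1/X)$, so that $[\dL|_{X_1}]$ has degree divisible by $n$; then to the neighbourhood $(X_2,s_2)$ given by the maximal abelian exponent-$n$ quotient of $\pi_1(X_{1,\bar k},\bar x_1)$, on which the map $\Pic_{X_1/k}[n]=\Hom\big(\pi_1(X_{1,\bar k},\bar x_1),\bZ/n\bZ(1)\big) \to \Pic_{X_2/k}[n]$ is zero, so the obstruction class dies and $[\dL|_{X_2}]$ is $n$-divisible in $\Pic_{X_2/k}(k)$. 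Your Albanese argument is the $\Pic^0$-shadow of this second step; if you want to keep it, you must replace the degree reduction by something like the first step above (make the degree divisible by $n$ by a covering of suitable degree, and then run the obstruction-killing argument for the full Picard scheme, not just $\Pic^0$), rather than appeal to a rational class of low degree. Part (2) of your argument coincides with the paper's: the hypothesis kills $b(M)$ in \eqref{eq:defineBrauerObstructionMap}, and injectivity of $\Pic(X') \inj \Pic_{X'/k}(k)$ upgrades equality of classes to an isomorphism $\dL|_{X'}\simeq \dM^{\otimes n}$.
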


\begin{defn} \label{defi:locdiv}
If the conclusion (2) of Proposition~\ref{prop:locdiv} holds, then we say that the \textbf{line bundle $\dL$ is divisible locally in neighbourhoods of $s$.}
\end{defn}

\begin{proof}[Proof of Proposition~\ref{prop:locdiv}]
Let $\Pic_{X/k}^{n\ast}$ denote the subgroup of the Picard variety of line bundles of degree divisible by $n$. The boundary map to 
\[
0 \to \Pic_{X/k}[n] \to \Pic_{X/k} \to \Pic_{X/k}^{n\ast} \to 0,
\]
namely
\[
\Pic_{X/k}^{n\ast}(k) \to \rH^1(k,\Pic_{X/k}[n]),
\]
describes the obstruction to being divisible by $n$ in the Picard variety for line bundles of degree divisible by $n$. This obstruction is natural under pullback.

\smallskip

We first prove assertion (1). Since $\pi_1(X_{\bar k},\bar x)$ has finite quotients of order divisible by $n$, we find as in 
Example~\ref{ex:neighbourhoods} a  neighbourhood $(X_1,s_1)$ of $s$ with  $n \mid \deg(X_1/X)$. Then $[\dL|_{X_1}] \in \Pic^{n\ast}_{X_1/k}(k)$ because
\[
\deg(\dL|_{X_{1,\bar k}}) = \deg(X_1/X) \cdot \deg(\dL).
\]

Let $(X_2,s_2)$ be the  neighbourhood of $s_1$ associated to the maximal abelian quotient of exponent $n$ of the group $\pi_1(X_{1,\bar k},\bar x_1)$.
Then the induced map 
\[
\Pic_{X_1/k}[n] = \Hom\big(\pi_1(X_{1,\bar k},\bar x_1),\bZ/n\bZ(1)\big) \to \Hom\big(\pi_1(X_{2,\bar k},\bar x_2),\bZ/n\bZ(1)\big) = \Pic_{X_2/k}[n]
\]
is the zero map. Thus the obstruction for  $[\dL|_{X_1}]$ to divisibility by $n$  in the Picard variety vanishes after restriction to  $X_2$.  This proves (1).

Let $M \in \Pic_{X_2/k}(k)$ be an $n$th root of $[\dL|_{X_2}]$. In order to prove (2) we have to investigate the Brauer obstruction for $M$ to come from an actual line bundle. But this is the class $b(M)$ for the map $b$ in \eqref{eq:defineBrauerObstructionMap} for $X_2/k$ and $b$ vanishes by assumption. This concludes the proof of (2).
\end{proof}

\begin{prop} 
\label{prop:locallydivequalc1killedbys}
Let $X/k$ be a smooth projective curve of positive genus,  let  $s : \Gal_k \to \pi_1(X,\bar x)$ be a Galois section, and let $\dL$ be a line bundle on $X$. Then $\dL$ is locally divisible in neighbourhoods of $s$ if and only if $s^\ast(c_1(\dL)) = 0$.
\end{prop}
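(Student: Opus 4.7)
The plan is to reduce $s^\ast c_1(\dL) = 0$ to checking vanishing at each finite level, and then match these finite-level conditions with divisibility by $n$ of $\dL$ on suitable neighbourhoods. Via Proposition~\ref{prop:Chern-short-exact} and Remark~\ref{rmk:modnchernsuffice} applied to the base $\Spec k$, the natural map $\rH^2(k, \bT(\Gm)) \to \varprojlim_n \rH^2(k, \mu_n)$ is an isomorphism, so $s^\ast c_1(\dL) = 0$ is equivalent to $s^\ast c_1(\dL)_n = 0$ in $\rH^2(k, \mu_n)$ for every $n \geq 1$. On the geometric side, any neighbourhood $(X', s')$ of $s$ is itself a smooth, projective, geometrically connected curve of positive genus, hence an algebraic $\rK(\pi,1)$ by Example~\ref{ex:Kpi1examples}\eqref{exitem:curveKpi1}; the Kummer fragment
\[
\Pic(X') \xrightarrow{\cdot n} \Pic(X') \xrightarrow{c_1(\cdot)_n} \rH^2(X', \mu_n)
\]
then identifies ``$\dL|_{X'}$ is an $n$-th power of a line bundle'' with ``$c_1(\dL|_{X'})_n = 0$''.

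The forward implication is then pure functoriality. For each $n$, pick a neighbourhood with $\dL|_{X'} \simeq \dM^{\otimes n}$ guaranteed by local divisibility; this forces $c_1(\dL|_{X'})_n = 0$, and since $s$ factors through $\pi_1(X', \bar x') \subseteq \pi_1(X, \bar x)$ via $s'$, naturality of $c_1$ yields $s^\ast c_1(\dL)_n = s'^\ast c_1(\dL|_{X'})_n = 0$. Letting $n$ vary gives $s^\ast c_1(\dL) = 0$ after passing to the inverse limit.

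For the converse I would invoke the standard continuity statement for cohomology of profinite groups with finite coefficients: the closed subgroup $s(\Gal_k) \subseteq \pi_1(X, \bar x)$ equals the intersection $\bigcap_{(X', s')} \pi_1(X', \bar x')$ of its open overgroups coming from neighbourhoods, and hence
\[
\rH^2(\Gal_k, \mu_n) \;=\; \varinjlim_{(X', s')} \rH^2(\pi_1(X', \bar x'), \mu_n) \;=\; \varinjlim_{(X', s')} \rH^2(X', \mu_n),
\]
where the second equality uses the $\rK(\pi,1)$ property of each $X'$ to pass from group cohomology back to étale cohomology. The hypothesis $s^\ast c_1(\dL)_n = 0$ thus produces some neighbourhood $(X', s')$ on which $c_1(\dL)_n$ restricts to zero, i.e.\ $c_1(\dL|_{X'})_n = 0$, and the Kummer fragment above then provides an $n$-th root of $\dL|_{X'}$ as a line bundle. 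Running this for every $n$ gives local divisibility in the sense of Definition~\ref{defi:locdiv}.

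The only step I expect to demand a line or two of careful justification is the colimit formula in the converse, since one must check that neighbourhoods $(X', s')$ really provide a cofinal system of open overgroups of $s(\Gal_k)$ and that cohomology with the finite discrete module $\mu_n$ behaves continuously under this filtered intersection; both facts are standard but are the only places in the argument where anything beyond formalities intervenes.
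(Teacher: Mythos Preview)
Your argument is correct and follows essentially the same route as the paper: reduce to finite levels via Proposition~\ref{prop:Chern-short-exact}, use the Kummer sequence to translate $n$-th-power divisibility into vanishing of $c_1(\cdot)_n$, and pass between $s^\ast$ and restriction to neighbourhoods by continuity. The only cosmetic difference is that the paper packages the colimit step by working on the pro-scheme $X_s = \varprojlim_{(X',s')} X'$ and using the isomorphism $\rH^2(X_s,\bZ/n\bZ(1)) \simeq \rH^2(k,\bZ/n\bZ(1))$, whereas you spell out the equivalent direct-limit formula $\rH^2(\Gal_k,\mu_n) = \varinjlim_{(X',s')} \rH^2(X',\mu_n)$ explicitly; these are two ways of saying the same thing.
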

\begin{proof}
Let $X_s$ be the projective limit of the pro-system of all neighbourhoods of $s$. Then $\dL$ is locally divisible in neighbourhoods of $s$ if and only if $\dL|_{X_s}$ is divisible in $\Pic(X_s)$. 

The Kummer sequence on $X_s$ yields the exact sequence
\[
\Pic(X_s) \xrightarrow{n \cdot} \Pic(X_s) \xrightarrow{c_1()_n} \rH^2(X_s,\bZ/n\bZ(1)),
\]
so that $\dL|_{X_s}$ is divisible by $n$ on $X_s$ if and only if $c_1(\dL|_{X_s})_n = 0$.  

Naturality of the first Chern class and the isomorphism
\[
\rH^2(X_s,\bZ/n\bZ(1)) = \rH^2(\pi_1(X_s,\bar x),\bZ/n\bZ(1)) \xrightarrow{s^\ast} \rH^2(k,\bZ/n\bZ(1))
\]
show that  $c_1(\dL|_{X_s})_n = 0$  if and only if $s^\ast(c_1(\dL)_n) = 0$. 
Moreover, $s^\ast(c_1(\dL)_n) = 0$ for all $n \geq 1$ if and only if $s^\ast(c_1(\dL)) = 0$, because by Proposition~\ref{prop:Chern-short-exact}~\eqref{item1prop:continuousH2} for $T = \Gm$ and $X = \Spec(k)$ we have
\[
\rH^2(k,\hZ(1)) \simeq \varprojlim_{n \in \bN} \rH^2(k, \bZ/n\bZ(1)). \qedhere
\]
\end{proof}

\begin{prop}
\label{thm:compare_conditions}
Let $X/k$ be a smooth projective curve of positive genus, and let  $s : \Gal_k \to \pi_1(X,\bar x)$ be a Galois section. Consider the following assertions.
\begin{enumerate}
\item[(a)]  All line bundles $\dL$ on $X$ are locally divisible in neighbourhoods of $s$.
\item[(b)] $s^\ast  \circ c_1 : \Pic(X) \to \rH^2(k, \hZ(1))$ vanishes.
\item[(c)] The relative Brauer group $\Br(X/k)$ vanishes.
\item[(c')] The relative Brauer group $\Br(X'/k)$ vanishes for all neighbourhoods $X'$ of $s$.
\end{enumerate}
Then the following implications hold:
\[
(c') \Longrightarrow (a) \iff (b) \Longrightarrow (c).
\]
\end{prop}

\begin{proof}
(c') $\Longrightarrow$ (a) was proven in  Proposition~\ref{prop:locdiv} (2). The equivalence of (a) with (b) follows from 
Proposition~\ref{prop:locallydivequalc1killedbys}.

\smallskip 

For (a) $\Longrightarrow$ (c) we have to show that $b(L) = 0$ for all  $L \in \Pic_{X/k}(k)$.  Since $b(L) \in \Br(X/k)$ is torsion, there is an $n \geq 1$ such that $L^{\otimes n} = [\dM]$ for a line bundle $\dM$ on $X$. By assumption (a), there is a neighbourhood $(X',s')$ of $s$  such that 
$\dM|_{X'}$ admits an $n$th root 
\[
\dM|_{X'} = \dL'^{\otimes n}
\]
with a line bundle $\dL'$  on  $X'$. The difference 
$
\Delta = L|_{X'} - [\dL']
$
is an $n$-torsion element in $\Pic_{X'/k}(k)$.  By Proposition~\ref{prop:brauer_obstruction_and_sections} (1) we compute 
\[
b(L) = b(L|_{X'}) = b(\Delta) + b([\dL']) = 0,
\]
and this proves (c).
\end{proof}

\subsection{Lifting to \texorpdfstring{$F_D$}{FD} over \texorpdfstring{$\bQ$}{}}
In this section we study the lifting problem over the field $\bQ$.

\begin{thm}
\label{thm:mainresult?}
Let $X/\bQ$ be a smooth projective curve of positive genus, and let $D \subset X$ be a union of  torsion sub-packets. Then every Galois section $s : \Gal_\bQ \to \pi_1(X,\bar x)$  lifts to a section $\Gal_\bQ \to \pi_1(F_D,\bar y)$.
\end{thm}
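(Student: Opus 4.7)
The proof plan is to combine three earlier results in a short chain: the devissage for unions of torsion packets, the equivalence/implications relating local divisibility, vanishing of pulled-back Chern classes and vanishing of relative Brauer groups, and the special vanishing of $\Br(X/\bQ)$ for curves admitting a Galois section over $\bQ$.

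First I would apply Corollary~\ref{cor:lfitunionoftorsionpackets}. Writing $D = \bigcup_{i=1}^n D_i$ with each $D_i$ a torsion packet, the lifting of $s$ to $\pi_1(F_D,\bar y)$ is equivalent to the vanishing
\[
s^\ast\bigl(c_1(\dO_X(D_i))\bigr) = 0 \quad \text{in } \rH^2(\bQ,\hZ(1)) \text{ for every } i.
\]
Since each $\dO_X(D_i)$ is simply a line bundle on $X$, this would follow at once from condition (b) of Theorem~\ref{thm:compare_conditions}, i.e., from the vanishing of $s^\ast \circ c_1$ on all of $\Pic(X)$.

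Next I would verify the stronger hypothesis (c') of Theorem~\ref{thm:compare_conditions}: for every neighbourhood $(X',s')$ of $s$ the relative Brauer group $\Br(X'/\bQ)$ vanishes. This is the only place where the hypothesis $k=\bQ$ is used. Any neighbourhood $X' \to X$ is a connected finite étale cover of a smooth projective curve of positive genus, hence is itself a smooth projective geometrically connected curve of positive genus over $\bQ$; and by definition of a neighbourhood, $s'$ is a Galois section of $\pi_1(X'/\bQ)$. Therefore Corollary~\ref{cor:b_vanishes_overQ} applies to $(X',s')$ and yields $\Br(X'/\bQ) = 0$.

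Finally, the implication $(c') \Rightarrow (b)$ of Theorem~\ref{thm:compare_conditions} gives that $s^\ast \circ c_1$ vanishes on $\Pic(X)$, in particular on each class $[\dO_X(D_i)]$, completing the proof via Corollary~\ref{cor:lfitunionoftorsionpackets}. There is no genuine technical obstacle to anticipate beyond what is already packaged into the cited statements; the whole content of the argument is the interplay $\text{(Hasse principle for Brauer)} \Rightarrow \text{(no Brauer obstruction in neighbourhoods)} \Rightarrow \text{(local divisibility)} \Rightarrow \text{(Chern obstructions killed by }s\text{)}$, with the torsion-packet hypothesis providing the reduction from $F_D$ to a condition about $\Pic(X)$.
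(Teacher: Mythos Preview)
Your proposal is correct and follows essentially the same route as the paper's proof: reduce via Corollary~\ref{cor:lfitunionoftorsionpackets} to the vanishing of $s^\ast(c_1(\dO_X(D_i)))$, then invoke the implication $(c') \Rightarrow (b)$ of Theorem~\ref{thm:compare_conditions}, with $(c')$ supplied by Corollary~\ref{cor:b_vanishes_overQ} applied to each neighbourhood. Your write-up is in fact slightly more explicit than the paper's in checking that the hypotheses of Corollary~\ref{cor:b_vanishes_overQ} hold for an arbitrary neighbourhood $(X',s')$, but the logical skeleton is identical.
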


\begin{proof}
Let $D  = \bigcup_{i=1}^n D_i$ be the decomposition into torsion sub-packets $D_i \subset X$.  The torsor $F_D$ is the product (over $X$) of the $F_{D_i}$, and similarly $\pi_1(F_D)$ is the fibre product (over $\pi_1(X)$) of the $\pi_1(F_{D_i})$, so we can assume that $D$ is a single torsion sub-packet.

The section $s$ lifts if and only if $s^\ast(c_1(F_D)) = 0$. Since $T_D$ is the restriction of scalars of $\Gm$, Shapiro's Lemma and Hilbert's Theorem 90 imply $\rH^1(k,T_D) =0$. Using  Proposition~\ref{prop:Chern-short-exact} and Lemma~\ref{lem:Tate-torsion-free}, we find that $s^\ast(c_1(F_D))$ takes values in the torsion free group
\[
\rH^2(k,\bT(T_D)) \simeq \bT\big(\rH^2(k,T_D)\big).
\]
Hence, we may replace $F_D$ by a multiple. The result follows from a diagram chase in 
\[
\xymatrix{ 
&&  \rH^1(X,\mathbb G_m)\ar[r]_{c_1}\ar[dl]_{i_\ast} & \rH^2(X,\widehat{\mathbb Z}(1))\ar[dl] \ar[dd]^{s^*}\\
		&\rH^1(X,T_D)\ar[r]_{c_1}  &\rH^2(X,\bT(T_D))\ar[dd]^{s^*}&\\
 &&&\rH^2(k,\widehat{\mathbb Z}(1))\ar[dl]_{i_\ast} \\
 0\ar[r]&\rH^1(k,T_D)=0\ar[r] &\rH^2(k,\bT(T_D))\ar[r] &\bT(\rH^2(k,T_D))
 }
\]
Namely, since $D$ is a torsion sub-packet, we can apply Proposition~\ref{thm:crit-torsion-torsors} and choose an integer $N\geq 1$ so that $N\cdot E_D$ is trivial. Then $N\cdot F_D$ comes from a $\mathbb G_m$-torsor, and the result follows from 
Proposition~\ref{thm:compare_conditions} and Corollary~\ref{cor:b_vanishes_overQ}.
\end{proof}

\begin{cor}
\label{cor:liftoutsideQpoints}
Let $X/\bQ$ be a smooth projective curve of positive genus, and let $U = X \setminus X(\bQ)$ be the complement of the set of all $\bQ$-rational points.  Then every Galois section $s : \Gal_\bQ \to \pi_1(X,\bar x)$  lifts to a section $\Gal_\bQ \to \pi_1^{cc}(U,\bar x)$.
\end{cor}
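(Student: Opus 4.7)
\begin{pro*}[Proof proposal]
The plan is to deduce this from Theorem~\ref{thm:mainresult?} via a limit argument over finite subsets of $X(\bQ)$. For any finite subset $D \subseteq X(\bQ)$, the divisor $D$ is a union of torsion packets, since each individual $\bQ$-rational point is itself a torsion packet (see the remark following Definition~\ref{defi:torsionpacket}). Hence Theorem~\ref{thm:mainresult?} produces a lift of $s$ to $\pi_1(F_D, \bar y_D)$, and via the isomorphism of Proposition~\ref{cc} this translates into a lift of $s$ along $\pi_1^{cc}(U_D, \bar x) \to \Gal_\bQ$, where $U_D := X \setminus D$.

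By definition of $\pi_1^{cc}(U, \bar x)$ (or a direct verification using the functoriality in Lemma~\ref{functoriality}), one has
\[
\pi_1^{cc}(U, \bar x) \; \simeq \; \varprojlim_{D} \pi_1^{cc}(U_D, \bar x),
\]
where $D$ ranges over finite subsets of $X(\bQ)$ ordered by inclusion, with transition maps induced by the inclusions $U \subseteq U_{D_2} \subseteq U_{D_1}$ for $D_1 \subseteq D_2$. Thus giving a section $\Gal_\bQ \to \pi_1^{cc}(U,\bar x)$ lifting $s$ amounts to producing a compatible family of lifts $(t_D)_D$, one for each finite $D \subseteq X(\bQ)$.

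For each such $D$, the set $L_D$ of sections lifting $s$ to $\pi_1^{cc}(U_D, \bar x)$ is nonempty by the first step. Moreover, $L_D$ is a closed subspace of the compact Hausdorff space $\pi_1^{cc}(U_D, \bar x)^{\Gal_\bQ}$ (endowed with the product topology), since $\pi_1^{cc}(U_D, \bar x)$ is profinite. The transition maps $L_{D_2} \to L_{D_1}$ for $D_1 \subseteq D_2$ are continuous, so $(L_D)$ is a cofiltered inverse system of nonempty compact Hausdorff spaces. A standard compactness argument (Tychonoff) then shows that $\varprojlim_D L_D$ is nonempty, which provides the desired section $\Gal_\bQ \to \pi_1^{cc}(U, \bar x)$ lifting $s$.

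The only real input is Theorem~\ref{thm:mainresult?}, which secures nonemptiness of each $L_D$; the remaining work is the (routine) compactness argument needed to pass from finite truncations $U_D$ to the possibly infinite complement $U = X \setminus X(\bQ)$. I expect this compactness/Tychonoff step to be the main (minor) obstacle, especially if $X(\bQ)$ is infinite, since in that case $U$ is not itself a finite-type $\bQ$-scheme and $\pi_1^{cc}(U, \bar x)$ must be interpreted as the above inverse limit.
\end{pro*}
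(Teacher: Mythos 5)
Your core reduction is exactly the paper's: a finite $D \subseteq X(\bQ)$ is a union of torsion packets, Theorem~\ref{thm:mainresult?} lifts $s$ to $\pi_1(F_D,\bar y)$, and Proposition~\ref{cc} identifies this with a lift to $\pi_1^{cc}(U_D,\bar x)$ (the paper simply applies Theorem~\ref{thm:mainresult?} to $D=X(\bQ)$ and relegates the infinite case, which occurs only in genus $1$, to a remark interpreting the statement via projective limits, as you do). The genuine gap is in your compactness step. The set $L_D$ of \emph{continuous} lifts of $s$ is not closed in $\pi_1^{cc}(U_D,\bar x)^{\Gal_\bQ}$ with the product topology: being a homomorphism and lifting $s$ are pointwise-closed conditions, but continuity is not, and a pointwise (ultrafilter) limit of continuous homomorphisms out of $\Gal_\bQ$ into a profinite group can be discontinuous. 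So the Tychonoff argument, applied to the closure of $L_D$, only produces an abstract, possibly discontinuous, homomorphism lift, which is not a Galois section. Nor can this be repaired by topologizing $L_D$ differently: once nonempty, the continuous lifts of $s$ form a torsor under the continuous cocycles of $\Gal_\bQ$ with values in $\pi_1(T_{D,\bar k},1)\simeq \hZ(1)^{\deg D}$, and already $\rH^1(\bQ,\mu_2)=\bQ^\times/(\bQ^\times)^2$ is an infinite discrete group; these lift sets are simply not compact, so ``inverse limit of nonempty compact spaces'' is not the right tool here.

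The good news is that the infinite case needs no compactness at all, and the repair is already in the paper. By Proposition~\ref{functoriality-product} and Lemma~\ref{lem:relativeKuennethforpi1}, for a finite set $D=\{x_1,\dots,x_n\}\subseteq X(\bQ)$ one has
\[
\pi_1(F_D,\bar y)\;\simeq\;\pi_1(F_{\{x_1\}},\bar y_1)\times_{\pi_1(X,\bar x)}\cdots\times_{\pi_1(X,\bar x)}\pi_1(F_{\{x_n\}},\bar y_n).
\]
So choose, once and for all and independently for each $x\in X(\bQ)$, a lift $t_x$ of $s$ to $\pi_1(F_{\{x\}},\bar y_x)$ (Theorem~\ref{thm:mainresult?}, or Proposition~\ref{prop:lifttoFDwhenEDtorsion}, for the single torsion packet $\{x\}$). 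The tuples $(t_x)_{x\in D}$ then define lifts $t_D$ for every finite $D$ that are compatible by construction, hence a section into $\varprojlim_D\pi_1(F_D,\bar y)\simeq\varprojlim_D\pi_1^{cc}(U_D,\bar x)$. (Equivalently: since $T_{D_2}\simeq T_{D_1}\times_k T_{D_2\setminus D_1}$, the transition maps between the nonempty lift sets are surjective, and a countable directed inverse system of nonempty sets with surjective transitions has nonempty limit.) With this replacement of your third paragraph, your argument is complete and in fact supplies the detail that the paper's own Remark after Corollary~\ref{cor:liftoutsideQpoints} leaves implicit.
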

\begin{proof}
If $X(\bQ)$ is infinite (at most for $X$ of genus $1$), then we understand $\pi_1^{cc}(U,\bar x)$ as the natural projective limit of $\pi_1^{cc}(X \setminus D,\bar x)$ with $D$ ranging over all finite subsets $D \subseteq X(\bQ)$. We may therefore restrict to the case of $U = X \setminus D$ and $D \subseteq X(\bQ)$ a finite set. 

The divisor $D \subset X$ is a union of torsion sub-packets. Therefore the section $s$ lifts to a section $\Gal_\bQ \to \pi_1(F_D,\bar y)$ by Theorem~\ref{thm:mainresult?}, and since $\pi_1^{cc}(U,\bar x) \simeq \pi_1(F_D,\bar y)$ by Proposition~\ref{cc}, this  completes the proof.
\end{proof}

\begin{rem} Note that similar arguments enable to show over a local field $k$ a prime to $p$ version of Theorem \ref{thm:mainresult?}, with $\pi_1(F_D,\bar y)$ replaced by its quotient by the $p$-part of the geometric fundamental group $\pi _1 ((T_{D})_{\bar k} )$.
\end{rem}


\bibliography{lifting_short-mn}

\end{document}